\newtheorem{theorem}{Theorem}[section]
\newtheorem{proposition}[theorem]{Proposition}
\newtheorem{problem}[theorem]{Problem}
\newtheorem{algorith}[theorem]{Algorithm}
\newtheorem{remark}[theorem]{Remark}
\newcommand{\NN}{\mathbb N}
\newcommand{\M}{\mathcal M}
\newcommand{\Y}{\mathcal Y}
\newcommand{\GG}{\mathcal G}
\newcommand{\HH}{{{\mathcal{H}}}}
\newcommand{\TT}{{\mathbb T}}
\newcommand{\gr}{\ensuremath{\operatorname{gra}}}
\newcommand{\Id}{{\rm Id}}
\newcommand{\emp}{\ensuremath{{\varnothing}}}
\newcommand{\ran}{\textnormal{ran}\,}
\newcommand{\Fix}{\ensuremath{\operatorname{Fix}}}
\newcommand{\mini}{\mathop{\mathrm{minimize}}}
\newcommand{\scal}[2]{{\left\langle{{#1}\mid{#2}}\right\rangle}}
\newcommand{\menge}[2]{\big\{{#1}~\big |~{#2}\big\}} 
\newcommand{\pinf}{\ensuremath{{+\infty}}}
\newcommand{\midd}{\ensuremath{\operatorname{mid}}}
\newcommand{\RR}{\ensuremath{\mathbb{R}}}
\newcommand{\RP}{\ensuremath{\left[0,+\infty\right[}}
\newcommand{\RM}{\ensuremath{\left]-\infty,0\right]}}
\newcommand{\RPP}{\ensuremath{\left]0,+\infty\right[}}
\newcommand{\infconv}{\ensuremath{\mbox{\small$\,\square\,$}}}
\newcommand{\RX}{\ensuremath{\left]-\infty,+\infty\right]}}
\newcommand{\prox}{\ensuremath{\text{\rm prox}}}
\newcommand{\argminE}{\mathop{\mathrm{argmin}}}
\numberwithin{equation}{section}
\DeclareSymbolFont{fouriersymbols}{FMS}{futm}{m}{n}
\DeclareSymbolFont{fourierlargesymbols}{FMX}{futm}{m}{n}
\DeclareMathDelimiter{\nr}{\mathord}{fouriersymbols}{152}{fourierlargesymbols}{147}
\DeclareMathDelimiter{\nr}{\mathord}{fouriersymbols}{152}{fourierlargesymbols}{147}
\title[{Primal-Dual Partial Inverse 
Algorithm}]{A Primal-Dual Partial Inverse 
Algorithm for Constrained 
Monotone Inclusions: Applications to Stochastic Programming
and Mean Field 
Games}
\author{Luis M. Brice\~no-Arias \& Julio Deride 
\& Sergio López-Rivera \& Francisco J. Silva}
\address{Departamento de Matem\'{a}tica, Universidad T\'{e}cnica Federico Santa Mar\'{i}a, Avenida Espa\~{n}a 1680, Valpara\'{i}so, Chile}
\email{luis.briceno@usm.cl, fernando.roldan@usm.cl}
\subjclass[2010]{47H05, 65K05, 65K15, 90C25, 90C90, 91-08.}
\begin{document}

\begin{abstract}
In this work, we study a constrained monotone inclusion involving 
the 
normal cone to a closed vector subspace and a priori information
on primal solutions. We model this 
information by imposing that solutions belong to the fixed 
point set of an averaged nonexpansive mapping. We characterize the 
solutions using an auxiliary inclusion that involves the partial inverse 
operator. Then, we propose the 
primal-dual partial inverse splitting and we prove its weak 
convergence to a solution of the inclusion, generalizing several 
methods in the 
literature. The 
efficiency of the proposed method is illustrated in 
multiple applications including constrained LASSO,  
stochastic arc capacity expansion problems in transport 
networks, and variational mean field games with non-local couplings.
\par
\bigskip

\noindent \textbf{Keywords.} {\it Constrained convex optimization, 
Constrained 
LASSO, Monotone operator 
theory, Partial inverse method, Primal-dual splitting,
 Stochastic arc capacity expansion,
Mean field games.}
\end{abstract}

\maketitle
\section{Introduction}
\label{sec1}	
In this paper, we propose a convergent algorithm for solving 
composite 
monotone inclusions 
involving a normal cone to a closed vector subspace and a priori 
information on the solutions. The precise formulation of the 
monotone inclusion under study is stated in 
Problem~\ref{problema1} below and
models several applications such as evolution 
inclusions \cite{ap1,Comi08,Sorin10}, variational inequalities 
\cite{19.Livre1,ap5}, 
partial differential equations (PDEs) \cite{ap2,ap3,ap4}, and various 
optimization problems. In the particular case when monotone 
operators are subdifferentials of proper lower semicontinuous 
convex functions, the inclusion covers the optimization problem 
with a priori information
\begin{equation}
\label{e:pricpintro}
\textrm{find} \,\, {x}\in S\cap \argminE_{x\in V} \big(F(x) + 
G(Lx) + H(x)\big),
\end{equation}
where  $S$ is a closed 
convex subset of a real Hilbert space $\HH$
modeling the a priori information on the solution, $V\subset\HH$ 
is a 
closed vector subspace, $L$ is a linear bounded 
operator from $\HH$ to a real Hilbert space $\GG$, 
$F\colon\HH\to\RX$, 
$G\colon\GG\to\RX$, 
and $H\colon\HH\to\RR$ are proper lower semicontinuous 
convex  functions, and $H$ is G\^ateaux 
differentiable. This class 
of problems appears in PDEs \cite[Section 3]{mercier}, 
signal and image processing \cite{aujol,clions,daub}, and stochastic 
traffic theory \cite{sheff,aplicacion_julio}, mean field games 
\cite{benamoucarlier15,BAKS},
among other fields. In the aforementioned applications, the 
vector subspace constraint models intrinsic properties of the 
solution or 
non-anticipativity in stochastic problems. In turn, the a priori 
information can be used to reinforce feasibility in the iterates, resulting 
in 
more efficient algorithms, as explored in \cite{jota1}.

In the case when the closed vector subspace is the whole Hilbert 
space, the 
monotone inclusion can be solved by the 
algorithm in \cite{jota1}. This method uses the a priori 
information represented by $S$ to 
improve the efficiency, generalizing the algorithm in 
\cite{11.vu} for monotone inclusions and in \cite{10.condat} for convex 
optimization.

In addition, when no a priori information is considered, 
the methods proposed in \cite{ref1,partial_inv} solve particular
instances of our problem using the partial inverse introduced in 
\cite{ref1}. This mathematical tool exploits the vector 
subspace structure of the inclusion and has been used, for example, 
in \cite{comb_partial_inverse,calc_inv_partial}.
Our problem in the more general context without a priori 
information can be solved by algorithms in 
\cite{11.vu,20.CombPes12,mon_skew}, using 
product space techniques without special consideration on the vector 
subspace structure. The product space formulation generates 
methods that include updates of high 
dimensional dual variables at each iteration, which reduce their 
performance.

The objective of this paper is to provide an algorithm for solving 
the inclusion under study
in its full generality, by taking advantage of 
the vector subspace structure and the a priori information of the 
inclusion. Our method is obtained from the combination of the 
algorithm in \cite{jota1} with partial inverse techniques developed in 
\cite{ref1,partial_inv,calc_inv_partial}. We illustrate the advantages of 
the partial inverse approach and the use of the a priori information by 
means of numerical experiences on particular instances of 
\eqref{e:pricpintro}. 
In this context, the a priori information is modeled by a set formed by 
some of the constraints of the problem and the additional projections 
in our method improve the speed of the convergence with respect to 
existing methods. In order to test the efficiency of our 
method, we consider the constrained LASSO problem, the arc 
capacity expansion in traffic networks, and Mean Field Games 
(MFG)
with non-local couplings.  The constrained LASSO problem 
combines sparse minimization and least squares under linear 
constraints, in which the vector subspace structure appears 
naturally. This problem appears in 
portfolio optimization, internet advertising, and curve estimation in 
statistics 
\cite{Gaines,James} 
and covers the generalized LASSO \cite{GenLASSO}, fused 
LASSO \cite{FusedLASSO}, and trend filtering on graphs \cite{Trend}, 
among others.
Our second application is devoted to solve a stochastic arc capacity 
expansion problem in transport networks.  We follow the two stage 
stochastic programming model described in 
\cite{aplicacion_julio,Transport1,Yin}, where an investment decision 
over arc capacity expansion is made in the first stage, and a traffic 
assignment problem under uncertain demand is solved in the second 
stage. Our solution strategy relies on a non-anticipativity approach, 
exploiting the vector subspace structure of the induced problem.
The last application is the numerical approximation of 
MFG equilibria 
\cite{Lasry_Lions_2007,Huang_et_all}. The theory of MFGs aims to 
describe equilibria of symmetric stochastic differential games with a 
continuum of agents. These, in turn, provide approximate equilibria for 
the corresponding game with a large, but finite, number of players. In 
its standard form, MFG equilibria is  characterized by a system of two 
coupled PDEs, which is called MFG system. In some particular cases, 
the MFG system is the first order optimality condition of a linearly 
constrained convex optimization problem
\cite{Lasry_Lions_2007,Benamou_et_al_2017} fitting in
our a priori information and vector subspace modeling (see 
\cite{BAKS}). 
In the context of  ergodic MFGs with non-local couplings 
\cite{Lasry_Lions_2006i,Lasry_Lions_2007,meszaros_silva_2018}, 
we illustrate the efficiency of our method and compare it with some 
benchmark algorithms. 		

The paper is organized as follows. In 
Section~\ref{preeliminares_paper2}, we set our notation 
and preliminaries. 
Section~\ref{prob_results} is devoted to the formulation our 
problem, the characterization of 
the solutions by using the 
partial inverse operator, the proof of the weak convergence 
of 
our algorithm, and a discussion of the connections with existing 
methods in the literature. In Section~\ref{tests12}, we implement 
the proposed algorithm in the context of constrained convex 
optimization. More precisely, in Section~\ref{sec:numer_LASSO} 
we compare the performance of our method with algorithms 
available in the literature for the constrained LASSO problem,
and in Section~\ref{sec_aplic}, we apply our method to 
the stochastic arc capacity expansion problem in a transport 
network. In Section~\ref{sec:numer_mfg}, we compare 
implementations to several formulations of the underlying 
optimization problem variational ergodic MFG with non-local 
couplings. Finally, we provide our conclusions and perspectives in 
Section~\ref{conclusion_paper2}.

\section{Notation and Background}
\label{preeliminares_paper2}
Throughout this paper, $\HH$ and $\GG$ are real Hilbert spaces. 
We denote their scalar products by $\left\langle \,\cdot\, | \,\cdot\, 
\right\rangle$ and their
associated norm by $\left\|\,\cdot\,\right\|$. The class of bounded 
linear operators from $\HH$ to a real Hilbert space $\GG$ is 
denoted 
by ${\mathcal{L}}(\HH,\GG)$ and if $\HH=\GG$ this class is denoted 
by 
${\mathcal{L}}(\HH)$. Given $L\in {\mathcal{L}}(\HH,\GG)$, its adjoint 
operator 
is 
denoted by $L^{*}\in {\mathcal{L}}(\GG,\HH)$. The projection operator 
onto 
a 
nonempty closed convex set $C\subset {\mathcal{H}}$ is denoted by 
$P_{C}$ 
and the normal cone to $C$
is denoted by $N_{C}$.
Let $A\colon\HH\rightrightarrows\HH$ be a set-valued 
operator. We denote by $\gr A$ its 
graph, by $A^{-1}\colon u\mapsto \menge{x\in\HH}{u\in Ax}$ its 
inverse operator, and
by $J_{A}:=(\Id+A)^{-1}$ its resolvent, where $\Id$ is the identity 
operator on $\HH$. Moreover, $A$ is $\rho$-strongly monotone for 
$\rho \geq 0$ iff, for every $(x,u)$ and $(y,v)$ in $\gr A$, we have 
$\scal{x-y}{u-v} \geq 
\rho\left\|x-y\right\|^{2}$, it is monotone iff it is $0-$strongly
monotone, and it is maximally monotone iff it is monotone and 
there is no monotone operator $B$ such that $\gr A\subsetneq 
\gr B$.
Let $V$ be a closed vector subspace of $\HH$. The partial inverse 
of $A$ with respect to $V$, denoted by 
$A_{V}$, is the operator defined via its graph by \cite{ref1}
\begin{equation}
\label{e:definvpart}
\gr A_{V}:=\{(x,u)\in\HH\times\HH\,:\,(P_{V}x + P_{V^{\perp}}u, 
P_{V}u + P_{V^{\perp}}x)\in \gr A\}.
\end{equation}
Let $T\colon \HH \rightarrow \HH$. The set of fixed points of $T$ is 
denoted by $ \Fix T$.
The operator $T$ is $\alpha-$averaged nonexpansive for 
some $\alpha\in \left]0,1\right[$ iff
\begin{equation}
(\forall (x,y)\in \HH^{2})\quad \|Tx-Ty\|^{2} \leq \|x-y\|^{2} 
-\left(\frac{1-\alpha}{\alpha}\right) \|(\Id-T)x-(\Id-T)y\|^{2}
\end{equation}
and it is 
$\beta-$cocoercive for some $\beta>0$ iff
\begin{equation}
(\forall (x,y)\in \HH^{2})\quad
\scal{ x-y}{Tx-Ty} \geq \beta 
\|Tx-Ty\|^{2}.
\end{equation}
The class of lower semicontinuous convex proper functions $f 
\colon\HH \rightarrow \left]-\infty, +\infty\right]$ is denoted by 
$\Gamma_{0}(\HH)$.
The 
subdifferential of $f\in \Gamma_{0}(\HH)$ is denoted by $\partial 
f$ and the proximity 
operator of $f$ on $x\in\HH$, denoted by $\prox_{f}x$, is 
the unique 
minimizer of $f+\|\cdot-x\|^2/2$. For every nonempty closed convex 
set $C\subset\HH$, we denote by $\iota_C\in\Gamma_0(\HH)$
the indicator function of $C$. 
For further information on convex analysis and monotone operator 
theory, the reader is referred to \cite{19.Livre1}.
\raggedbottom
\section{Problem and main result}
\label{prob_results}

We consider the following composite primal-dual inclusion problem
with a priori information.
\begin{problem}
\label{problema1}
Let $\HH$ and $\GG$ be real Hilbert spaces and let $V\subset 
\HH$ and 
$W\subset\GG$ be closed vector subspaces. Let $T\colon\HH\to\HH$ 
be an $\alpha-$averaged nonexpansive
operator for some $\alpha\in\left]0,1\right[$, let $L\in 
{\mathcal{L}}(\HH,\GG)$ be such 
that $\ran L \subset W$, let $A\colon \HH \rightrightarrows \HH$, 
$B\colon \GG \rightrightarrows \GG$, and $D\colon \GG 
\rightrightarrows \GG$ be maximally 
monotone operators, let $C\colon \HH \rightarrow \HH$ be a 
$\beta-$cocoercive operator for some $\beta\in\left]0,+\infty\right]$, 
and suppose that $D$ is 
$\delta-$strongly 
monotone for some $\delta\in\left]0,+\infty\right]$. 
The 
problem is to 
\begin{equation}
\label{eq1}\tag{$\mathcal{I}$}
\textrm{find} \quad (x,u)\in (V\cap \Fix T)\times W \quad \textrm{such 
	that} 
\quad \begin{cases}
0\in Ax + Cx +L^{*}u + N_{V}x\\
0\in B^{-1}u+D^{-1}u-Lx,
\end{cases}
\end{equation}
under the assumption that \eqref{eq1} admits solutions.
\end{problem}
Consider the case when  $W=\GG$, $A=\partial F$, $B=\partial G$, 
$C=\nabla H$,
and $D=\partial \ell$, where $F\in \Gamma_{0}(\HH)$, $G\in 
\Gamma_{0}(\GG)$, $H\colon \HH\rightarrow {\mathbb{R}}$ is a 
differentiable convex function with $\beta^{-1}-$Lipschitz gradient, and 
$\ell \in \Gamma_{0}(\GG)$ is $\delta-$strongly convex. By defining 
$G\infconv \ell$ as the infimal convolution of $G$ and $\ell$,
it follows from 
$\partial (G\infconv \ell)=((\partial G)^{-1}+(\partial \ell)^{-1})^{-1}$
\cite[Proposition~15.7(i) \& Proposition~25.32]{19.Livre1} that
Problem~\ref{problema1}  reduces to the constrained optimization 
problem
\begin{equation}
\label{e:pricp}
\textrm{find} \,\, {x}\in \Fix T\cap \argminE_{x\in V} \big(F(x) + 
(G\infconv \ell)(Lx) + H(x)\big),
\end{equation}
under standard qualification conditions. 
Note that from
\cite[Corollary~18.17]{19.Livre1}, $\nabla H$ is 
$\beta-$cocoercive, $\ell^*$ is G\^ateaux differentiable, and 
$\nabla\ell^*$ is $\delta-$cocoercive.
In the case when 
$T=P_C$ for some nonempty closed 
convex set $C$, \eqref{e:pricp} models optimization 
problems with a priori information on the solution 
\cite{jota1}. Moreover, \eqref{e:pricp} models the problem of finding
a common solution to two convex optimization problems when $T$ is 
such that $\Fix T=\arg\min 
\Phi$, for some convex function $\Phi$ (e.g., if $T=\prox_{\Phi}$, 
$\Fix T=\arg\min \Phi$). More generally, 
by using the a priori 
information on solutions with a suitable operator $T$, 
we can find common solutions to two 
monotone inclusions.

In the particular case when $V=\HH$, $W=\GG$, and $T=\Id$, 
\cite{11.vu} solves Problem~\ref{problema1}, and the corresponding 
optimization problem can be solved by the method 
proposed in \cite{10.condat}. Previous methods are generalizations 
of several classical splitting algorithms in particular instances, such as 
the proximal-point algorithm \cite{rock, martinet}, the 
forward-backward splitting \cite{lions}, and the Chambolle-Pock's 
algorithm \cite{12.CP}. In \cite{jota1}, previous methods are 
generalized for solving 
Problem~\ref{problema1} in the case when $W\subset\GG$ 
and $T$ is an arbitrary averaged nonexpansive operator.

In the case when $V\ne\HH$, the algorithms proposed in 
\cite{ref1,partial_inv} solve Problem~\ref{problema1} in particular 
instances, exploiting the vector subspace structure by using the partial 
inverse of a monotone operator. A convergent splitting method 
generalizing the partial inverse algorithm in \cite{ref1} is proposed in 
\cite{partial_inv} and solves Problem~\ref{problema1} when $B=0$. 
The algorithms proposed in \cite{11.vu,20.CombPes12,mon_skew} 
solves Problem~\ref{problema1} when $W=\GG$ and $T=\Id$, using 
product space techniques without special consideration on the vector 
subspace structure of $N_{V}$. The resulting method involves higher 
dimensional dual variables to be updated at each iteration, affecting 
the performance of the algorithm.

In the next section we provide our algorithm and 
main 
results.
\raggedbottom
\subsection{Algorithm and convergence}
\label{sec:mainres}
Our algorithm for solving 
Problem~\ref{problema1} is the following. 
\begin{algorith}
\label{algo:main}
Set $x^{0}\in V$, $\overline{x}^{0}=x^0$, $y^{0}\in V^{\perp}$, 
$u^{0}\in \GG$, $\tau >0$, and set $\gamma>0$. 
\begin{equation}
\label{eq5}
(\forall k\in {\mathbb{N}})\quad
\left\lfloor
\begin{array}{rl}
\eta^{k+1} &=J_{ \gamma 
	B^{-1}}\big(u^{k}+\gamma(L\overline{x}^{k} - 
D^{-1}u^{k})\big)\\
u^{k+1}&=P_{W}\eta^{k+1}\\
w^{k+1}&=J_{\tau A}\big(x^{k}+\tau 
y^{k}-\tau 
P_{V}(L^{*}u^{k+1} + 
Cx^{k})\big)\\
r^{k+1}&=P_{V}w^{k+1}\\
x^{k+1}&=P_{V}Tr^{k+1}\\
y^{k+1}&=y^{k}+{(r^{k+1}-w^{k+1})}/{\tau}\\\
\overline{x}^{k+1}&=x^{k+1}+r^{k+1}-x^{k}.
\end{array}
\right.
\end{equation}
\end{algorith}

Algorithm~\ref{algo:main} exploits the 
vector subspace structure via $P_V$ and $P_W$ and the a priori 
information on the solution of the inclusion via $T$. 

In the following theorem, we prove the weak convergence of the 
primal-dual sequences generated by Algorithm~\ref{algo:main} to a 
solution to Problem~\ref{problema1}.
We first characterize the solutions to 
Problem~\ref{problema1} as 
solutions to an auxiliary monotone inclusion involving the 
partial inverse of $A$ with respect to $V$ and we deduce 
the convergence of the iterates generated by 
Algorithm~\ref{algo:main} through 
a suitable application of the method in \cite{jota1} to 
the auxiliary inclusion.

\begin{theorem}
\label{teo1}
In the context of Problem \ref{problema1}, let 
$\big((x^k,u^k)\big)_{k\in\NN}$ be the sequence 
generated by Algorithm~\ref{algo:main}, where
we assume that 
$\tau \in \left]0,2\beta\right[$, $\gamma \in 
\left]0,2\delta\right[$, and that
\begin{equation}
\label{e:pramcond}
\|L\|^{2}< \left(\dfrac{1}{\tau} 
-\dfrac{1}{2\beta}\right)\left(\dfrac{1}{\gamma} 
-\dfrac{1}{2\delta}\right).
\end{equation}
Then $\big((x^k,u^k)\big)_{k\in\NN}$ converges weakly to 
a solution $(\widehat{x},\widehat{u})$ to 
Problem~\ref{problema1}.
\end{theorem}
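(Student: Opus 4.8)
The plan is to follow exactly the strategy announced before the theorem: reduce Problem~\ref{problema1} to an auxiliary monotone inclusion involving the partial inverse $A_V$, and then recognize Algorithm~\ref{algo:main} as an instance of the primal-dual method from \cite{jota1} applied to that auxiliary inclusion. First I would establish the characterization of solutions. Starting from the pair of inclusions in \eqref{eq1}, the first line reads $0\in Ax+Cx+L^*u+N_Vx$ with $x\in V\cap\Fix T$. Since $x\in V$ and the subgradient inclusion contains a term in $N_Vx=V^\perp$, I would split $L^*u+Cx$ and the residual of $Ax$ using the orthogonal decomposition $\HH=V\oplus V^\perp$. The key algebraic fact is the defining relation \eqref{e:definvpart}: writing $u' := L^*u$ (which we may as well keep, noting $\ran L\subset W$ so $u\in W$ is natural), one shows that $0\in Ax+Cx+L^*u+N_Vx$ with $x\in V$ is equivalent to an inclusion of the form $P_V(\,\cdot\,)\in A_V(\,\cdot\,)$ after absorbing $N_Vx$ into the $V^\perp$-component. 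Concretely I expect to arrive at: there exists $y\in V^\perp$ such that $0\in A_V x + P_V(Cx+L^*u) + \{y\}$-type data, or more precisely that the auxiliary variable $b:=x+$(something in $V^\perp$) satisfies a single inclusion $0\in A_V b + \widetilde C b + \widetilde L^* u$ where the cocoercive and linear parts are composed with $P_V$. The second line of \eqref{eq1}, $0\in B^{-1}u+D^{-1}u-Lx$, together with $u\in W$, is handled symmetrically using the subspace $W$ and the hypothesis $\ran L\subset W$; here one uses that $P_W L = L$ and that $D$ (hence $D^{-1}$) is single-valued and cocoercive by \cite[Corollary~18.17]{19.Livre1}, so $D^{-1}$ plays the role of a cocoercive operator on the dual side.

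Once the auxiliary inclusion is in place, the second step is bookkeeping: I would verify that the iteration \eqref{eq5} is literally the \cite{jota1} algorithm for the auxiliary problem. The correspondence I expect is that the pair $(w^{k+1}, r^{k+1})$ implements the resolvent $J_{\tau A}$ together with the projection $P_V$ that together realize the resolvent of $\tau A_V$ (this is the standard Spingarn-type identity $J_{A_V}$ expressed via $J_A$ and the projections $P_V, P_{V^\perp}$, cf.\ \cite{ref1,calc_inv_partial}), the variable $y^k\in V^\perp$ is the Spingarn multiplier tracking the $V^\perp$-component, the step $x^{k+1}=P_VTr^{k+1}$ injects the averaged operator $T$ exactly as the $T$-step in \cite{jota1}, and the overrelaxation $\overline x^{k+1}=x^{k+1}+r^{k+1}-x^k$ is the inertial/extrapolation step of the Condat–Vũ–type scheme. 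The dual block $\eta^{k+1}=J_{\gamma B^{-1}}(\cdots)$ followed by $u^{k+1}=P_W\eta^{k+1}$ is the dual proximal step composed with the projection onto $W$, which is how \cite{jota1} handles the constraint $u\in W$. I would check that the $D^{-1}$ term enters as the forward (cocoercive) dual evaluation and $Cx^k$ as the forward primal evaluation, matching the forward-backward-half-forward structure.

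The third step is to transfer the convergence theorem from \cite{jota1}. That result requires step-size conditions of the form $\tau<2\beta$ (primal cocoercivity), $\gamma<2\delta$ (dual cocoercivity, using $\delta$-cocoercivity of $D^{-1}$), and a coupling bound relating $\|L\|$, $\tau$, $\gamma$ to the cocoercivity constants — which is exactly \eqref{e:pramcond}; one has to make sure that passing through the partial inverse does not change the relevant operator norm, and indeed since $P_V$ is a projection, $\|P_V L^*\|\le\|L\|$ and the composed linear operator in the auxiliary problem has norm at most $\|L\|$, so \eqref{e:pramcond} suffices (and the bound is not weakened because on the relevant subspaces the norm is attained). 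Applying \cite{jota1} then yields weak convergence of the auxiliary primal-dual sequence, and I would translate back through the characterization of the first step: the $V$-component of the auxiliary primal iterate is $x^k$, which converges weakly to some $\widehat x\in V\cap\Fix T$, and $u^k$ converges weakly to some $\widehat u\in W$, with $(\widehat x,\widehat u)$ solving \eqref{eq1}. Existence of a solution is assumed in Problem~\ref{problema1}, which is what \cite{jota1} needs to guarantee the solution set of the auxiliary inclusion is nonempty.

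The main obstacle I anticipate is the first step: getting the partial-inverse reformulation exactly right, in particular correctly tracking which operator gets composed with which projection, and ensuring the auxiliary problem genuinely has the structure required by \cite{jota1} (maximal monotonicity of $A_V$ — which holds by \cite{ref1} since $A$ is maximally monotone — and cocoercivity of the composed forward terms, which needs the projections to be handled carefully since $P_V C$ need not be cocoercive in general, so one must keep $C$ acting on the full space and only project the \emph{output} appropriately). Verifying the equivalence of fixed points of the iteration map with solutions, and that the $T$-step and the $y$-update are consistent with the averagedness bookkeeping of \cite{jota1}, is where the real work lies; the step-size algebra is then routine.
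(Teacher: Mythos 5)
Your high-level plan coincides with the paper's strategy (reformulate via the partial inverse, then recognize \eqref{eq5} as the method of \cite{jota1} applied to the auxiliary inclusion), but the proposal leaves unresolved, or resolves incorrectly, the three ingredients that make this identification actually work. First, the scaling is missing: the auxiliary inclusion must be written with the partial inverse of the \emph{scaled} operator, $\widetilde A=(\tau A)_V$, together with the change of variables $z=x+\tau\big(y-P_{V^{\perp}}(L^{*}u+Cx)\big)$, $v=\tau u$, and dual stepsize $\sigma=\tau\gamma$, $\widetilde B=\tau B$, $\widetilde D=\tau D$. This is not cosmetic: what the iteration $(w^{k+1},r^{k+1},y^{k+1})$ computes is $p^{k+1}=(2P_V-\Id)J_{\tau A}\widetilde z^{k}+P_{V^{\perp}}\widetilde z^{k}=J_{(\tau A)_V}\widetilde z^{k}$, i.e.\ the resolvent with parameter $1$ of $(\tau A)_V$ (cf.\ \cite{ref1,calc_inv_partial}); there is no analogous formula for $J_{\tau(A_V)}$, and $(\tau A)_V\neq \tau A_V$ in general, so with your unscaled auxiliary inclusion $0\in A_Vb+\cdots$ the bookkeeping step ``\eqref{eq5} is literally the \cite{jota1} algorithm'' fails. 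The same scaling is what turns \eqref{e:pramcond} into exactly the stepsize condition of \cite{jota1} for the pair $(1,\sigma)$ with cocoercivity constants $\beta/\tau$ and $\tau\delta$, and $\tau J_{\gamma B^{-1}}=J_{\tau\gamma(\tau B)^{-1}}\circ(\tau\Id)$ is needed to match the dual block.

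Second, your anticipated treatment of the cocoercive term is the wrong one: keeping ``$C$ on the full space and projecting only the output'' gives $P_VC$, which need not be cocoercive; the correct forward term in the auxiliary problem is $\widetilde C=\tau P_VCP_V$, and a short computation using $P_V^{*}=P_V$ and nonexpansiveness of $P_V$ shows it is $(\beta/\tau)$-cocoercive. Third, the a priori information is not transported: the constraint $x\in V\cap\Fix T$ must become $z\in\Fix M$ with $M=P_VTP_V+P_{V^{\perp}}$, and one has to prove that $M$ is $\alpha$-averaged (this uses the orthogonal splitting and $\Id-M=P_V(\Id-T)P_V$); conversely, recovering $\widehat x\in V\cap\Fix T$ from $\widehat z\in\Fix M$ requires $\Fix(P_VT)=\Fix P_V\cap\Fix T$, which holds because $V\cap\Fix T\neq\varnothing$ (existence of solutions) via \cite[Proposition~4.49(i)]{19.Livre1}. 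You explicitly defer these points (``where the real work lies''), but they, together with the converse direction of the equivalence (a solution of the auxiliary inclusion yields a solution of \eqref{eq1}), constitute the actual proof; as written, the proposal is an accurate outline of the paper's route with its decisive steps either absent or pointing in a direction that would not go through.
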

\raggedbottom
\begin{proof}
We split the proof in two main parts. For the first part, the key point  
is to provide an equivalent formulation of Problem~\ref{problema1} 
which satisfies the 
hypotheses in \cite[Problem~3.1]{jota1} by using the properties of the 
partial inverse. In the second part, we prove that \eqref{eq5}
corresponds to the algorithm proposed in \cite[Theorem~3.1]{jota1}
applied to the equivalent formulation.

\begin{enumerate}
\item\label{dem_part_1}  Let $(x,u)\in(V\cap \Fix T)\times W$ be a 
solution to 
Problem~\ref{problema1}.
Note that 
\begin{align*}
B^{-1}+ D^{-1}
& = \left((\tau B)^{-1}+(\tau D)^{-1}\right)\circ (\tau\Id).
\end{align*}
\raggedbottom
Therefore, by using \eqref{e:definvpart}, 
there exists $y\in N_{V}x=V^{\perp}$ such that 
\begin{align}
\begin{cases}
y-L^{*}u-Cx \in Ax\\
Lx\in B^{-1}u+D^{-1}u
\end{cases}
&\Leftrightarrow\quad 
\begin{cases}
\tau(y-L^{*}u-Cx) \in (\tau A)x\\
Lx\in (\tau B)^{-1}(\tau u)+(\tau D)^{-1}(\tau u)
\end{cases}\label{e:eqaux}\\
&\Leftrightarrow\quad 
\begin{cases}
-\tau P_{V}(L^{*}u+Cx)
\in (\tau A)_{V}z\\
Lx\in (\tau B)^{-1}(\tau u)+(\tau D)^{-1}(\tau u)
\end{cases}\nonumber\\
\label{e:auxfin2}
&\Leftrightarrow\quad 
\begin{cases}
-P_{V}L^{*}v
\in (\tau A)_{V}z+\tau P_VCP_Vz\\
LP_Vz\in (\tau B)^{-1}v+(\tau D)^{-1}v,
\end{cases}
\end{align}
where
\begin{equation}
\label{sol_aux}
z:=x+\tau(y-P_{V^{\perp}}(L^{*}u+Cx)) \quad\text{and}\quad 
v:=\tau u\in W.
\end{equation}
In addition, since $P_Vz=x\in\Fix 
T\cap 
V$, then $TP_{V}z=P_{V}z$ and, thus, 
$P_{V}TP_{V}z=P_{V}^{2}z=P_{V}z=z-P_{V^{\perp}}z$, which yields 
$z\in\Fix M$, where
\begin{equation}
\label{op_M}
M=P_{V}TP_{V}+P_{V^{\perp}}.
\end{equation}
Thus, by setting 
\begin{equation}
\label{e:deftildes}
\begin{cases}
\widetilde{A}=(\tau A)_{V}\\
\widetilde{B}=\tau B\\
\widetilde{C}=\tau P_{V}CP_{V}\\
\widetilde{D}=\tau D\\
\widetilde L=LP_{V},
\end{cases}
\end{equation}
we conclude from \eqref{e:auxfin2} that $(z,v)$ 
defined in \eqref{sol_aux}
solves
\begin{equation}
\label{inc_inv_equiv}
\textrm{find}\quad (z,v)\in \Fix M\times W \quad \textrm{such 
	that}\quad
\left\lbrace
\begin{array}{ll}
-\widetilde{L}^{*}v\in \widetilde{A}z + \widetilde{C}z\\
\widetilde{L}z\in \widetilde{B}^{-1}v +  \widetilde{D}^{-1}v.
\end{array}
\right.
\end{equation}
In addition, we have that $\widetilde{A}$ and $\widetilde{B}$ are 
maximally monotone 
operators \cite[Proposition 20.44(v) \& 
Proposition~20.22]{19.Livre1},
$\widetilde{D}$ is $\tau\delta-$strongly monotone, and, since $P_{V}$ 
is a bounded linear operator,  
$\widetilde{L}$ is a bounded linear operator such that $\ran 
\widetilde{L} \subset \ran L \subset W$.
Moreover, since 
$P_{V}$ is nonexpansive and $P_{V}^{*}=P_{V}$, for 
every 
$(x,y)\in \HH \times \HH$ we have
\begin{align*}
\scal{\widetilde{C}x-\widetilde{C}y}{x-y}&=\tau 
\scal{CP_{V}x-CP_{V}y}{P_{V}x-P_{V}y}\nonumber\\
&\geq \tau\beta \|CP_{V}x-CP_{V}y\|^{2}\nonumber\\
&\geq \tau\beta \|P_{V}CP_{V}x-P_{V}CP_{V}y\|^{2}\nonumber\\
&= \dfrac{\beta}{\tau}\|\widetilde{C}x-\widetilde{C}y\|^{2},
\end{align*}
which implies that $\widetilde{C}$ is 
$\frac{\beta}{\tau}-$cocoercive. Furthemore, we claim that $M$ 
defined in \eqref{op_M} is $\alpha-$averaged nonexpansive. Indeed, 
let $(x,z)\in 
\HH^{2}$. Since $T$ is $\alpha-$averaged nonexpansive, $\Id = 
P_{V}+P_{V^{\perp}}$, $P_{V}$ is nonexpansive, and 
$\Id-M=P_V-P_V\circ T\circ 
P_V=P_V\circ (\Id-T)\circ P_V$, we 
have
\begin{align*}
\|Mx-Mz\|^{2}&=\|P_{V}TP_{V}x+P_{V^{\perp}}x 
-P_{V}TP_{V}z-P_{V^{\perp}}z\|^{2}\nonumber\\
&=\|P_{V}TP_{V}x-P_{V}TP_{V}z\|^{2} + 
\|P_{V^{\perp}}x-P_{V^{\perp}}z\|^{2}\nonumber\\
&\leq \|TP_{V}x-TP_{V}z\|^{2} + 
\|P_{V^{\perp}}x-P_{V^{\perp}}z\|^{2}\nonumber\\
&\leq  
\|P_{V}x-P_{V}z\|^{2}-\left(\dfrac{1-\alpha}{\alpha}\right)\|(\Id-T)P_{V}x-(\Id-T)P_{V}z\|^{2}
\nonumber\\
&\quad+ \|P_{V^{\perp}}x-P_{V^{\perp}}z\|^{2}\nonumber\\
&\leq \|x-z\|^{2} - 
\left(\dfrac{1-\alpha}{\alpha}\right)\|P_{V}(\Id-T)P_{V}x - 
P_{V}(\Id-T)P_{V}z\|^{2}\nonumber\\
&= \|x-z\|^{2} - \left(\dfrac{1-\alpha}{\alpha}\right)
\|(\Id-M)x-(\Id-M)z\|^{2}.
\end{align*}

Altogether, we conclude that \eqref{inc_inv_equiv} reduces to 
\cite[Problem~3.1]{jota1}.

Conversely, suppose that $(\widehat{z},\widehat{v}) \in \Fix M \times 
W$ 
solves \eqref{inc_inv_equiv}. By setting 
$\widehat{x}:=P_V\widehat{z}\in V$,
$\widehat{u}:=\widehat{v}/\tau\in W$, and 
$\widehat{y}:=P_{V^{\perp}}(\frac{1}{\tau}\widehat{z} +
L^{*}\widehat{u}+CP_{V}\widehat{z}) \in V^{\perp}$,
we get that
\begin{align*}
\widehat{z}&=P_{V}\widehat{z}+P_{V^{\perp}}\widehat{z}\\
&=\widehat{x}+\tau P_{V^{\perp}}\left(\frac{1}{\tau}\widehat{z}\right)\\
&=\widehat{x}+\tau\left(P_{V^{\perp}}\left(\frac{1}{\tau}\widehat{z} +
L^{*}\widehat{u}+CP_{V}\widehat{z}\right)-P_{V^{\perp}}(L^{*}\widehat{u}+C\widehat{x})\right)\\
&=\widehat{x}+\tau(\widehat{y}-P_{V^{\perp}}(L^{*}\widehat{u}+C\widehat{x})).
\end{align*}
Hence, $(\widehat{x},\widehat{u},\widehat{y},\widehat{z},\widehat{v})$ 
satisfies \eqref{sol_aux}. Then,
since $(\widehat{z},\widehat{v})$ satisfies \eqref{e:auxfin2}, we 
deduce that $(\widehat{x},\widehat{u},\widehat{y})$ satisfies 
\eqref{e:eqaux}. In addition, since $\widehat{z}\in \Fix M$, then 
$\widehat{z}=P_{V}TP_{V}\widehat{z}+\widehat{z}-P_{V}\widehat{z}$ 
and therefore $\widehat{x}\in\Fix(P_{V}T)$.
Now, since Problem~\ref{problema1} has 
solutions, $\Fix P_{V}\cap \Fix T =V\cap \Fix T\neq\varnothing$. 
Then, by \cite[Proposition~4.49(i)]{19.Livre1}, it follows that $\Fix 
(P_{V}T)= V\cap \Fix T$ and $\widehat{x}\in V\cap \Fix T$.

That is,
$(\widehat{x},\widehat{u})\in(V\cap\Fix T)\times W$ and therefore 
$(\widehat{x},\widehat{u})$ is a solution 
to \eqref{eq1}.	

\item\label{dem_part_2}

For every $k\in {\mathbb{N}}$, define $z^{k}:=x^{k}+\tau 
y^{k}$, 
$p^{k}:=r^{k}+\tau y^{k}$, 
$\overline{z}^{k+1}:={z}^{k+1}+{p}^{k+1}-z^{k}$, 
$\overline{z}^{0}:=z^{0}$, and
$\widetilde{z}^k=z^k-\tau P_V(L^*u^{k+1}+Cx^k)$. Note that  
\eqref{eq5} yields
$\{r^{k}\}_{k\in \mathbb{N}} 
\subset V$, $\{x^{k}\}_{k\in \mathbb{N}} \subset V$, and 
$\{y^{k}\}_{k\in \mathbb{N}} \subset V^{\perp}$. Hence, since 
for every $k\in\NN$, 
$r^k=P_Vp^k$ and $x^k=P_Vz^k$, it follows 
from \eqref{eq5}  that, for every $k\in\NN$,
\begin{equation}
\label{e:defz}
\left\lbrace
\begin{array}{ll}
z^{k+1}=P_{V}Tr^{k+1}+\tau 
y^{k+1}=P_{V}TP_{V}p^{k+1}+P_{V^{\perp}}p^{k+1}
=Mp^{k+1}\\
P_{V}\overline{z}^{k+1}=x^{k+1}+r^{k+1}-x^{k}=\overline{x}^{k+1}.\\
\end{array}
\right.
\end{equation}

In addition, 
$P_{V}\overline{z}^{0}=P_{V}z^{0}=x^{0}=\overline{x}^{0}$. Now,
from \eqref{eq5} and \cite[Proposition~3.1(i)]{calc_inv_partial} we 
deduce 
\begin{align}
{p}^{k+1}&=r^{k+1}+\tau y^{k+1}\nonumber\\
&=2r^{k+1}-w^{k+1}+\tau y^{k}\nonumber\\
&=(2P_{V}-\Id)J_{\tau 
	A}\widetilde{z}^{k}+P_{V^{\perp}}\widetilde{z}^{k}\nonumber\\
&=J_{(\tau A)_{V}}\widetilde{z}^{k}.
\end{align}


Also, note that
\begin{align*}
\tau J_{\gamma{B}^{-1}} &= 
\left((\Id+\gamma 
B^{-1})\circ(\tau^{-1}\Id)\right)^{-1}\nonumber\\
&=\left(\tau^{-1}(\Id+\tau\gamma(\tau 
B)^{-1})\right)^{-1}\nonumber\\
&=J_{\tau\gamma(\tau B)^{-1}}\circ(\tau\Id).
\end{align*}
Let us define  
$\sigma=\tau\gamma\in \left]0,2\tau\delta\right[$ and, for every 
$k\in 
{\mathbb{N}}$, set $v^{k}=\tau u^{k}$ and 
$\zeta^{k}=\tau\eta^{k}$. 
Thus, from $P_{V}^{*}=P_{V}$ and the linearity of $P_{V}$ and 
$P_{W}$, we deduce from \eqref{e:deftildes} and \eqref{e:defz} that 
\eqref{eq5} reduces to
\begin{equation}
\label{aux_alg}
(\forall k\in {\mathbb{N}})\quad
\left\lfloor
\begin{array}{ll}
\zeta^{k+1} = 
J_{\sigma{\widetilde{B}}^{-1}}(v^{k}+
\sigma(\widetilde{L}\overline{z}^{k}
- {\widetilde{D}}^{-1}v^{k}))\\
v^{k+1}=P_{W}\zeta^{k+1}\\
p^{k+1}=J_{\widetilde{A}}(z^{k}-({\widetilde{L}}^{*}v^{k+1} + 
\widetilde{C}z^{k}))\\
z^{k+1}=Mp^{k+1}\\
\overline{z}^{k+1}=z^{k+1}+p^{k+1}-z^{k}.
\end{array}
\right.
\end{equation}
On the other hand, from \eqref{e:pramcond} we obtain
\begin{equation}
\|\widetilde{L}\|^{2} \leq \|L\|^{2} < 
\left(1-\dfrac{1}{\frac{2\beta}{\tau}}\right)
\left(\dfrac{1}{\sigma}-\dfrac{1}{2\tau\delta}\right)
\end{equation} 
and $1\in \left]0,2\beta/\tau\right[$.
Altogether,
it follows that \eqref{aux_alg} is a particular case of the algorithm in 
\cite[Theorem~3.1]{jota1}. In addition, from part \ref{dem_part_1}, we 
have that problem \eqref{inc_inv_equiv} satisfies the hypothesis of 
\cite[Problem~3.1]{jota1}. Then, from \cite[Theorem~3.1(ii)]{jota1}
there exists $(\widehat{z},\widehat{v}) \in \Fix M \times W$ 
solution 
to \eqref{inc_inv_equiv} such that $(z^{k},v^{k})\rightharpoonup 
(\widehat{z},\widehat{v})$. Therefore, $u^{k}=v^{k}/\tau 
\rightharpoonup 
\widehat{v}/\tau=:\widehat{u}$ and, since $P_V$ is weakly continuous, 
we have $x^{k}=P_{V}z^{k} 
\rightharpoonup P_{V}\widehat{z}=:\widehat{x}$.
Furthermore, from part \ref{dem_part_1}, we conclude that 
$(\widehat{x},\widehat{u})\in (V\cap\Fix T)\times W$ is solution to 
\eqref{eq1}.

\end{enumerate}
\end{proof}

\begin{remark}
\label{obs_teo}
\begin{enumerate}
	\item\label{obs_1} When $T$ is weakly continuous, we have 
	$Tr^{k}\rightharpoonup 
	\widehat{x}$, where $(r^k)_{k\in\NN}$ is defined in \eqref{eq5}. 
	Indeed, by the proof of \cite[Theorem~3.1(ii)]{jota1}, the 
	sequence $(p^{k})_{k\in {\mathbb{N}}}$ in the algorithm 
	\eqref{aux_alg} satisfies that $p^{k}-z^{k}\rightarrow 0$. Then 
	$p^{k}=p^{k}-z^{k}+z^{k} \rightharpoonup \widehat{z}$ and, 
	since $P_{V}$ is weakly continuous, it follows that 
	$r^{k}=P_{V}p^{k}\rightharpoonup 
	P_{V}\widehat{z}=\widehat{x}$. Thus, since $\widehat{x}\in \Fix 
	T$, $Tr^{k}\rightharpoonup T\widehat{x}=\widehat{x}$.
	This fact helps to obtain a faster convergence in the context of 
	convex optimization with affine linear constraints, as we will see 
	in 	our numerical experiences. Indeed, in this 
	case we take $T=P_{C}$, where $C$ 
	represents some selection of the affine linear constraints ($T$ is 
	weakly continuous by \cite[Proposition 4.19(i)]{19.Livre1}). 
	Therefore, $\{Tr^{k}\}_{k\in\mathbb{N}}\subset C$, which 
	impose feasibility of the converging iterates to the primal solution. 

	In particular, if $\HH$ is finite dimensional and $T=P_{C}$ for 
	a nonempty closed convex $C\subset \HH$, we deduce, from 
	Remark~\ref{obs_teo}\eqref{obs_1} and the fact 
	that $P_{C}$ is continuous, that
	$Tr^{k}\rightarrow 
	\widehat{x}$.
	\item When $V=\HH$, we have that $V^{\perp}=\{0\}$ 
	and $P_{V}=\Id$. Thus, the algorithm \eqref{eq5} reduces to
	\begin{equation*}
	(\forall k\in {\mathbb{N}})\quad
	\left\lfloor
	\begin{array}{ll}
	\eta^{k+1} = J_{\gamma B^{-1}} (u^{k}+\gamma(L\overline{x}^{k} - 
	D^{-1}u^{k}))\\
	u^{k+1}=P_{W}\eta^{k+1}\\
	w^{k+1}=J_{\tau A}(x^{k}-\tau(L^{*}u^{k+1} + Cx^{k}))\\
	x^{k+1}=Tw^{k+1}\\
	\overline{x}^{k+1}=x^{k+1}+w^{k+1}-x^{k},
	\end{array}
	\right.
	\end{equation*}
	which is the algorithm proposed in \cite[Theorem~3.1]{jota1} when 
	the stepsizes $\tau$ and $\gamma$ are fixed. 
	\item When $T=\Id$, $W=\GG$, and 
	$B=D^{-1}=0$, we have that for every $\lambda>0$,
	$J_{\lambda B}=\Id$. Hence $J_{\gamma B^{-1}} = 0$ by 
	\cite[Proposition~23.7(ii)]{19.Livre1}. Then, the algorithm 
	\eqref{eq5} reduces 
	to
	\begin{equation*}
	(\forall k\in {\mathbb{N}})\quad
	\left\lfloor
	\begin{array}{ll}
	\widetilde{z}^{k}=x^{k}+\tau y^{k}-\tau P_{V} Cx^{k}\\
	w^{k+1}=J_{\tau A}\widetilde{z}^{k}\\
	x^{k+1}=P_{V}w^{k+1}\\
	y^{k+1}=y^{k}+{(x^{k+1}-w^{k+1})}/{\tau},
	\end{array}
	\right.
	\end{equation*}
	which is the algorithm proposed in \cite[Corollary~5.3]{partial_inv} 
	without relaxation steps ($\lambda_{n}\equiv 1$).
	\item In the context of the convex optimization problem 
	\eqref{e:pricp}, the proposed algorithm \eqref{eq5} reduces to
	\begin{equation}
	\label{alg_opt}
	(\forall k\in {\mathbb{N}})\quad
	\left\lfloor
	\begin{array}{ll}
	\eta^{k+1} = \prox_{\gamma 
		G^*}\big(u^{k}+\gamma(L\overline{x}^{k} - 
	\nabla 
	\ell^{*}(u^{k}))\big)\\
	u^{k+1}=P_{W}\eta^{k+1}\\
	\widetilde{z}^{k}=x^{k}+\tau y^{k}-\tau P_{V}(L^{*}u^{k+1} + \nabla 
	H(x^{k}))\\
	w^{k+1}=\prox_{\tau F}\widetilde{z}^{k}\\
	r^{k+1}=P_{V}w^{k+1}\\
	x^{k+1}=P_{V}Tr^{k+1}\\
	y^{k+1}=y^{k}+{(r^{k+1}-w^{k+1})}/{\tau}\\\
	\overline{x}^{k+1}=x^{k+1}+r^{k+1}-x^{k}.
	\end{array}
	\right.
	\end{equation}
	In particular, when $T=\Id$, $V=\HH$, $W=\GG$, and 
	$\ell=\iota_{\{0\}}$, we deduce that the algorithm 
	\eqref{alg_opt} is equivalent to
	\begin{equation}
	\label{alg_condat}
	(\forall k\in {\mathbb{N}})\quad
	\left\lfloor
	\begin{array}{ll}
	u^{k+1} = \prox_{\gamma G^{*}}\big(u^{k}+\gamma 
	L\overline{x}^{k}\big)\\
	\widetilde{z}^{k}=x^{k}-\tau(L^{*}u^{k+1} + \nabla H(x^{k}))\\
	x^{k+1}=\prox_{\tau F}\widetilde{z}^{k}\\
	\overline{x}^{k+1}=2x^{k+1}-x^{k},
	\end{array}
	\right.
	\end{equation}
	which is an error-free version of the algorithm proposed in
	\cite[Algorithm~3.1]{10.condat}. If  
	additionally $H=0$, the method \eqref{alg_condat} reduces to 
	\cite[Algorithm~1]{12.CP}.
\end{enumerate}
\end{remark} 

\section{Applications and Numerical Experiences}
\label{tests12}
In this section, we illustrate the efficiency of the proposed method in 
three applications.
First, we consider a sparse constrained 
convex 
optimization problem without including a priori 
information on the solution ($T=\Id$), called constrained LASSO 
\cite{Gaines,James}. The constraint is given by the kernel of a 
linear operator and we apply our primal-dual method
exploiting the vector subspace structure of the problem.
The second numerical experiment is the application 
of the proposed method to the arc capacity expansion problem in 
transport networks \cite{aplicacion_julio}. The 
problem is to find the optimal investment decision in arc capacity 
and network flow 
operation under an uncertain environment.
We solve the 
two-stage stochastic arc capacity expansion problem over a 
directed graph using our primal-dual partial inverse method 
in which the closed vector subspace includes the 
non-anticipativity constraint.  In our last application, 
we consider the finite
difference approximation introduced in 
\cite{Achdou_Capuzzo_Dolcetta_2010} of a second order ergodic 
variational MFGs with non-local couplings. This 
discretization preserves the variational structure of the resulting 
system of equations, which is solved using our primal-dual partial 
inverse method. In this framework, the underlying vector subspace 
includes the linear discrete ergodic Fokker-Planck 
equation appearing in the associated optimization problem.

All proposed algorithms are implemented in MATLAB 2020A and run 
in a 
computer with MacOS 11.6, 3 GHz 6-Core Intel Core i5 8GB RAM. 
\subsection{Constrained LASSO}
\label{sec:numer_LASSO}
We consider the following problem
\begin{equation}
\label{prob_num_exp}
\mini_{\substack{x\in \RR^{n}\\ Rx=0}}\,\, \alpha\|x\|_{1} + 
\dfrac{1}{2} \|Ax-b\|_{2}^{2},
\end{equation}
where $\alpha>0$, $A\in \RR^{p\times n}$, $R\in \RR^{m\times n}$ 
satisfies $\ker R^{\top} =\{0\}$, and $b\in \RR^{p}$. Note that, by 
setting $f=\alpha \|\cdot\|_{1}$, the 
problem in \eqref{prob_num_exp} can be written in at least the 
following three 
equivalent manners:
\begin{equation}
\label{prob_num_exp3}
\mini_{x\in \RR^{n}} f(x) + \dfrac{1}{2}g_{2}(Ax) + 
\dfrac{1}{2}\iota_{\{0\}}(Rx),\:\:\text{where}\:\:	g_{2}=\|\cdot - 
b\|_{2}^{2};
\end{equation}
\begin{equation}
\label{prob_num_exp2}
\mini_{x\in \ker R} f(x) + h(x),\:\:\text{where}\:\:h=\dfrac{1}{2} 
\|A(\cdot)-b\|_{2}^{2},
\end{equation}
and
\begin{equation}
\label{prob_num_exp1}
\mini_{x\in \ker R} f(x) + g_{1}(Ax),\:\:\text{where}\:\: 
g_{1}=\dfrac{1}{2}\|\cdot - b\|_{2}^{2}.
\end{equation}
Observe that $f\in \Gamma_{0}(\RR^{n})$, $g_{1}\in 
\Gamma_{0}(\RR^{p})$,  and 
$g_{2}=2g_{1}\in\Gamma_{0}(\RR^{p})$. 
Therefore,
the problem in  \eqref{prob_num_exp3} satisfies the hypotheses in 
\cite[Corollary~4.2(i)]{11.vu}. Thus, since 
\cite[Proposition~24.8(i)\& Theorem~14.3(ii)]{19.Livre1} yields ($\forall 
\gamma>0$)\,\, 
$\prox_{\gamma 
g_{2}^{*}}\colon x\mapsto 2(x-\gamma b)/(\gamma+2)$,
the primal-dual method proposed in 
\cite[Corollary~4.2(i)]{11.vu} reduces to
\begin{equation}
\label{alg_exp3}
(\forall k\in {\mathbb{N}})\quad
\left\lfloor
\begin{array}{ll}
x^{k+1} = \prox_{\tau\alpha 
\|\cdot\|_{1}}(x^{k}-\frac{\tau}{2}A^{\top}v_{1}^{k} - 
\frac{\tau}{2}R^{\top}v_{2}^{k})\\
y^{k}=2x^{k+1}-x^{k}\\
v_{1}^{k+1}=2(v_{1}^{k} + \sigma_{1} Ay^{k}-\sigma_{1} 
b)/(\sigma_{1}+2)\\
v_{2}^{k+1}=v_{2}^{k}+\sigma_{2}Ry^{k},
\end{array}
\right.
\end{equation}
where $(x^{0},v_{1}^{0},v_{2}^{0})\in 
\RR^{n}\times\RR^{p}\times\RR^{m}$ and the strictly positive constants
$\tau$, $\sigma_{1}$, and $\sigma_{2}$ satisfy the condition
$\sqrt{\frac{\tau}{2}\sigma_{1}\|A\|^{2} 
+\frac{\tau}{2}\sigma_{2}\|R\|^{2}}<1$.
On the other hand, we have that $h\colon 
\RR^{n} \rightarrow \RR$ is a differentiable convex function with 
$\|A^{\top}A\|-$ Lipschitz continuous gradient.
Then, by setting $V=\ker R$, the problem in 
\eqref{prob_num_exp2} can be solved by the algorithm in
\cite[Proposition~6.7]{partial_inv}, which reduces to
\begin{equation}
\label{alg_exp2}
(\forall k\in {\mathbb{N}})\quad
\left\lfloor
\begin{array}{ll}
w^{k+1} = \prox_{\lambda\alpha \|\cdot\|_{1}} (x^{k}+\lambda 
y^{k}-\lambda P_{\ker R}\nabla h(x^{k}))\\
x^{k+1}=P_{\ker R}w^{k+1}\\
y^{k+1}=y^{k}+(x^{k+1}-w^{k+1})/{\lambda},
\end{array}
\right.
\end{equation}
where $x^{0}\in \ker R$, $y^{0}\in(\ker R)^{\perp}$, and 
$\lambda\in\left]0,2/\|A^{\top}A\|\right[$. 
Moreover, from \cite[Proposition~24.8(i)\& 
Theorem~14.3(ii)]{19.Livre1} we have ($\forall \gamma>0$)\,\, 
$\prox_{\gamma 
g_{1}^{*}}\colon x\mapsto (x-\gamma b)/(\gamma+1)$. By setting 
$\HH=\RR^n$, $\GG=\RR^p$, and $V=\ker R$, 
the problem in 
\eqref{prob_num_exp1} 
satisfies the 
hypotheses in \eqref{e:pricp}, which is a particular instance of 
Problem~\ref{problema1}. Therefore,  \eqref{alg_opt}  in the case 
$T=\Id$ reduces to
\begin{equation}
\label{alg_exp1}
(\forall k\in {\mathbb{N}})\quad
\left\lfloor
\begin{array}{ll}
u^{k+1} = (u^{k} + \gamma (A\overline{x}^{k}-b))/(\gamma+1) \\
\widetilde{z}^{k}=x^{k}+\tau y^{k}-\tau P_{\ker R}A^{\top}u^{k+1}\\
w^{k+1}=\prox_{\tau\alpha \|\cdot\|_{1}}\widetilde{z}^{k}\\
x^{k+1}=P_{\ker R}{w}^{k+1}\\
y^{k+1}=y^{k}+(x^{k+1}-w^{k+1})/{\tau}\\
\overline{x}^{k+1}=2x^{k+1}-x^{k},
\end{array}
\right.
\end{equation}
where $x^{0}\in\ker R$, $\overline{x}^{0}=x^{0}$, $y^{0}\in 
(\ker R)^{\bot}$, $u^{0}\in \RR^{p}$, and $(\tau,\gamma)\in 
\left]0,+\infty\right[^{2}$ are such that $\tau\gamma\|A\|^{2}<1$. 

Note that, since $\ker R^{\top}=\{0\}$, $RR^{\top}$ is invertible. 
Then, by \cite[Example 29.17(iii)]{19.Livre1}, we have $P_{\ker 
R}=\Id-R^{\top}(RR^{\top})^{-1}R$. On the other hand, by 
\cite[Proposition~24.11 \& Example~24.20]{19.Livre1}, we have
\begin{equation*}
(\forall \tau>0)\quad \prox_{\tau 
\|\cdot\|_{1}}\colon x \mapsto  
\big(
\operatorname{sign}(x_i) \max \{|x_i|-\tau, 0\}
\big)_{1\le i\le n},
\end{equation*}
where $\operatorname{sign}$ is $1$ when the argument is 
positive
and $-1$ if it is strictly negative.

For each method, we obtain the average execution time and the 
average number of iterations from $20$ random instances for the 
matrices $A$, $R$, and  $b$, using $\alpha=1$. 
We measure the efficiency for different values of $m$, $n$, and 
$p$. We label the algorithm in \eqref{alg_exp3} as \textit{PD 
generalized}, algorithm in \eqref{alg_exp2} as \textit{FB with 
subspaces}, and algorithm in \eqref{alg_exp1} as \textit{PD with 
subspaces}. For every algorithm, we obtain the  values of $\tau$, 
$\gamma$, $\lambda$, $\sigma_{1}$, and $\sigma_{2}$ 
by discretizing the parameter set in which the algorithm 
converges and selecting the parameters such that the 
method stops in a 
minimum number of iterations. This procedure is repeated for every 
dimension of matrices and vectors. In particular, we fix
$\sigma_{1}=\sigma_{2}$ for the method in \eqref{alg_exp3}.
The results are shown in Table~\ref{tab:1}.

\begin{table}[H]
\caption{Average execution time (number of iterations) with relative 
	error tolerance $e=10^{-6}$}
\centering
\begin{tabular}{c|c|c|c|}
	\cline{1-4}
	\multicolumn{1}{|c|}{$(n,p,m)$}        & PD with subspaces & FB with 
	subspaces & PD generalized \\ \hline
	\multicolumn{1}{|c|}{$(500,250,25)$}   &      0.639 (3284)       
	&3.909 (21059)
	&   1.317 (4469)      \\ 
	\multicolumn{1}{|c|}{$(500,250,50)$}  &  0.822 (3565)
	& 5.014 (22145)
	&   1.254 (5081)
	\\ 
	\multicolumn{1}{|c|}{$(500,250,100)$}  &  1.289 (3523)
	& 7.956 (21374)
	&  1.817 (5527)
	\\ \hline
	\multicolumn{1}{|c|}{$(500,750,25)$}  &     0.488 (2184)
	& 3.615 (16577)
	&  1.012 (2991)
	\\ 
	\multicolumn{1}{|c|}{$(500,750,50)$} & 0.579 (2229)
	&  4.197 (16445)
	&   0.862 (3063)
	\\ 
	\multicolumn{1}{|c|}{$(500,750,100)$} & 0.854 (2117)
	&  6.345 (15853)
	&    1.129 (3066)
	\\ \hline
	\multicolumn{1}{|c|}{$(1000,500,50)$}   &    3.032 (8910)        
	&  	16.781 (49199)
	&     4.665 (11125)    \\ 
	\multicolumn{1}{|c|}{$(1000,500,100)$}  &  5.278 (9716)
	& 27.937 (51287)
	&  5.591 (12615)
	\\ 
	\multicolumn{1}{|c|}{$(1000,500,200)$}  &      10.830 (9036) 
	& 57.976 (48314)
	&  7.283 (13014)
	\\ \hline
	\multicolumn{1}{|c|}{$(1000,1500,50)$}  &   6.252 (4869)
	& 44.335 (34553)
	&    7.610 (6378)
	\\ 
	\multicolumn{1}{|c|}{$(1000,1500,100)$} & 7.911 (4992)
	&  54.217 (34507)
	&    8.691 (6484)
	\\ 
	\multicolumn{1}{|c|}{$(1000,1500,200)$} & 11.570 (4642)
	& 79.844 (32110)
	&   9.882 (6169)
	\\ \hline
\end{tabular}
\label{tab:1}
\end{table}
We observe a substantial gain in efficiency when we use 
\textit{PD with 
subspaces} with respect to the other two methods. The number of 
iterations is reduced in $25-30\%$ with respect to \textit{PD 
generalized}. The construction of \textit{PD with 
subspaces} exploiting the vector subspace and primal-dual structure 
of the problem explains these benefits.
However, the computational time used by \textit{PD with 
subspaces} is larger than that of \textit{PD generalized} when the 
vector subspace is smaller ($m=200$). The presence of two 
projections onto $\ker R$ at each iteration of 
\textit{PD with subspaces} explains this behaviour since the matrices 
to be inverted are of larger dimension.

\subsection{Capacity Expansion Problem in Transport 
Networks}
\label{sec_aplic}
In this section we aim at solving the traffic assignment problem with 
arc-capacity expansion with minimal cost on a network 
under uncertainty. Let  ${\mathcal{A}}$ be the set of 
arcs and let ${\mathcal{O}}$ and ${\mathcal{D}}$ be the sets of origin 
and 
destination nodes of the network, respectively. The set of 
routes from 
$o\in {\mathcal{O}}$ to $d\in {\mathcal{D}}$ is denoted by $R_{od}$ 
and
$R:=\cup_{(o,d)\in {\mathcal{O}}\times{\mathcal{D}}} 
R_{od}$ is the set of all routes. The arc-route 
incidence matrix $N\in 
{\mathbb{R}}^{|{\mathcal{A}}|\times |R|}$ is defined by $N_{ar}:=1$,
if arc $a$ belongs to the 
route $r$, and $N_{ar}:=0$, otherwise.

The uncertainty is modeled by a finite set $\Xi$ of
possible scenarios. For every scenario $\xi\in\Xi$, $p_{\xi}\in 
[0,1]$ is its probability of occurrence, 
$h_{od,\xi}\in {\mathbb{R}_+}$ is 
the forecasted demand from $o\in {\mathcal{O}}$ to $d\in 
{\mathcal{D}}$, 
$c_{a,\xi}\in {\mathbb{R}}_{+}$ 
is the corresponding capacity of the arc $a\in {\mathcal{A}}$,
$t_{a,\xi}\colon 
{\mathbb{R}}_{+}\rightarrow {\mathbb{R}}_{+}$ is an 
increasing and $\chi_{a,\xi}-$Lipschitz continuous travel time 
function on arc $a\in {\mathcal{A}}$, for some $\chi_{a,\xi}>0$, 
and the variable $f_{r,\xi}\in 
{\mathbb{R}_+}$ 
stands for the flow in route $r\in R$. 

In the problem of this section, we consider the expansion of 
flow capacity at each arc in order to improve the efficiency of the 
network operation. We model this decision 
making process in a two-stage stochastic problem. The first stage 
reflects the 
investment in capacity and the second corresponds to the 
operation of the network in an uncertain environment. 

In order to solve this problem, we take a non-anticipativity approach 
\cite{aplicacion_julio}, 
letting our first stage decision variable depend on the scenario
and	imposing a non-anticipativity constraint. We denote by 
$x_{a,\xi}\in\RR_+$ the variable of capacity expansion on arc 
$a\in\mathcal{A}$ in scenario $\xi\in\Xi$ and the non-anticipativity 
condition is defined by the constraint 
\begin{equation*}
\mathcal{N}:=\big\{x\in\RR^{|\mathcal{A}|\times|\Xi|}\,:\,(\forall 
(\xi,\xi')\in 
\Xi^{2})\quad x_{\xi}=x_{\xi'}\big\},
\end{equation*}
where $x_{\xi}\in\RR^{|\mathcal{A}|}$ is
the vector of capacity expansion for scenario $\xi\in\Xi$ and we denote
$f_{\xi}\in\RR^{|R|}$ analogously.
We restrict the capacity expansion variables by imposing, for every 
$a\in \mathcal{A} $ and $\xi\in\Xi$,
$x_{a,\xi}\in\left[0,M_a\right]$, where $M_a>0$ represents the upper 
bound of 
capacity expansion on arc $a\in \mathcal{A}$. Additionally,
we model the expansion investment cost via a quadratic function
defined by a symmetric positive definite matrix
$Q\in \RR^{|{\mathcal{A}}|\times |{\mathcal{A}}|}$.

\begin{problem}
\label{prob_transporte}
The problem is to
\begin{equation*}
\mini_{(x,f)\in  (\mathcal{N}\cap D^{|\Xi|})\times 
	{\mathbb{R}}_{+}^{|R||\Xi|}}  \displaystyle\sum_{\xi\in \Xi} 
p_{\xi}\left[\displaystyle\sum_{a\in {\mathcal{A}}} 
\displaystyle\int_{0}^{(Nf_{\xi})_a} 
t_{a,\xi}(z)dz+\frac{1}{2}x_{\xi}^{\top}Qx_{\xi}\right]
\end{equation*}
\begin{align}
\text{s.t.}				\hspace{1cm} (\forall \xi\in \Xi)(\forall a\in 
{\mathcal{A}})\hspace{1cm}
(Nf_{\xi})_a 
-x_{a,\xi}&\leq c_{a,\xi},
\label{rest2_prob_tr}\\
(\forall \xi\in \Xi)(\forall (o,d)\in 
{\mathcal{O}}\times{\mathcal{D}})\hspace{1cm}
\displaystyle\sum_{r\in R_{od}} f_{r,\xi} &=h_{od,\xi},
\label{rest1_prob_tr}
\end{align}
where $D:=\bigtimes_{a\in\mathcal{A}}[0,M_a]$, and we assume 
the 
existence of solutions.
\end{problem}
The first term of the objective function in 
Problem~\ref{prob_transporte} 
represents the expected operational cost of the network and only 
depends on traffic flows. This term is leads to optimality conditions
defining a Wardrop equilibrium \cite{beck}.
 The second term in the objective function is the 
expansion investment cost. Constraints in \eqref{rest2_prob_tr} 
represent that, for every arc $a\in\mathcal{A}$, the flow cannot 
exceed the expanded 
capacity $c_{a,\xi}+x_{a,\xi}$ at each scenario $\xi\in\Xi$,
while \eqref{rest1_prob_tr} are the demand constraints.

We solve Problem~\ref{prob_transporte} following the structure of the 
problem in \eqref{e:pricp} with $T=\Id$. We consider the following two 
equivalent formulations.
\subsubsection{Primal-Dual Formulation}
Note that Problem~\ref{prob_transporte} can be equivalently written as
\begin{align*}
\label{prob_esto_aplic}\tag{$\mathcal{P}$}
\mini_{(x,f)\in {\mathbb{R}}^{|{\mathcal{A}}||\Xi|}\times 
{\mathbb{R}}^{|R||\Xi|}} 
F(x,f)+G(L(x,f))+H(x,f),
\end{align*}
where 
\begin{equation}
\label{e:defsform1}
\begin{cases}
(\forall \xi\in\Xi)\quad V^{+}_\xi:=\Big\{f\in {\mathbb{R}}_{+}^{|R|}\,:\, 
(\forall (o,d)\in {\mathcal{O}}\times{\mathcal{D}})\,\, \sum_{r\in 
R_{od}} 
f_{r} =h_{od,\xi}\Big\}\\[3mm]
\Lambda:=(D^{|\Xi|}\cap \mathcal{N})\times 
\Big(\bigtimes_{\xi\in\Xi} V_{\xi}^{+}\Big)\\[3mm]
F:=\iota_{\Lambda}\\[2mm]
(\forall \xi\in\Xi)\quad \Theta_{\xi}:=\left\{(x,u)\in 
{\mathbb{R}}^{|{\mathcal{A}}|}\times{\mathbb{R}}^{|{\mathcal{A}}|}\,:\, 
(\forall 
a\in {\mathcal{A}})\,\, u_{a} -x_{a}\leq c_{a,\xi}\right\}\\[3mm]
G:=\iota_{\bigtimes_{\xi \in \Xi} \Theta_{\xi}}\\[2mm]
L\colon (x,f)\mapsto (x_{\xi},Nf_{\xi})_{\xi\in \Xi}\\[2mm]
H\colon (x,f) \mapsto \displaystyle\sum_{\xi\in \Xi} 
p_{\xi}\left[\sum_{a\in {\mathcal{A}}} 
\displaystyle\int_{0}^{(Nf_{\xi})_a} 
t_{a,\xi}(z)dz+\frac{1}{2}x_{\xi}^{\top}Qx_{\xi}\right].
\end{cases}
\end{equation}
Observe that
$F$ and 
$G$ are lower semicontinuous convex proper functions, and  
$L$ is linear and bounded
with $\|L\|\leq \max\left\{1,\|N\|\right\}$. Moreover, 
note that since $(t_{a,\xi})_{a\in\mathcal{A},\xi\in\Xi}$ are increasing, 
$N$ is linear, and $Q$ is definite positive, $H$ is a separable convex 
function. In addition, by defining $$\psi\colon f\mapsto 
(p_{\xi}N^{\top}(t_{a,\xi}((Nf_{\xi})_a))_{a\in
{\mathcal{A}}})_{\xi \in \Xi},$$ simple computations yield
\begin{equation*}
\nabla 
H\colon (x,f)\mapsto \left(\left(p_{\xi}Qx_{\xi}\right)_{\xi\in 
\Xi},\psi(f)\right),
\end{equation*}
which  is Lipschitz continuous with constant
\begin{equation*}
\beta^{-1}:=\max_{\xi \in \Xi} \left( p_{\xi}
\max\Big\{\|Q\|,\|N\|^{2}\max_{a\in {\mathcal{A}}} 
\chi_{a,\xi}\Big\}\right).
\end{equation*}
Altogether,
\eqref{prob_esto_aplic} is a 
particular instance of problem in \eqref{e:pricp} with 
$V={\mathbb{R}}^{|{\mathcal{A}}||\Xi|}\times 
{\mathbb{R}}^{|R||\Xi|}$ and $\ell=\iota_{\{0\}}$.
Therefore, this formulation satisfies the hypotheses in 
\cite{10.condat} its algorithm reduces to	
\begin{equation}
\label{alg_ejem_direct}
(\forall k\in {\mathbb{N}})\quad
\left\lfloor			
\begin{array}{ll}
\text{For every }\xi\in\Xi\\
\left\lfloor
\begin{array}{ll}
(\widetilde{p}_{\xi}^{k},\widetilde{u}_{\xi}^{k})=(p_{\xi}^{k}+\gamma\overline{x}_{\xi}^{k},u_{\xi}^{k}+\gamma
N\overline{f}_{\xi}^{k})\\
(p_{\xi}^{k+1},u_{\xi}^{k+1})
=(\widetilde{p}_{\xi}^{k},\widetilde{u}_{\xi}^{k})-\gamma				
P_{\Theta_{\xi}}(\gamma^{-1}(\widetilde{p}_{\xi}^{k},\widetilde{u}_{\xi}^{k}))
\end{array}
\right.\\
\widetilde{x}^{k}=x^{k}-\tau (p^{k+1}+ 
(p_{\xi}Qx_{\xi}^{k})_{\xi\in \Xi})\\
\widetilde{f}^{k}=f^{k}-\tau (N^{\top}u^{k+1} + \psi(f^{k}))\\
x^{k+1}=P_{D^{|\Xi|}\cap \mathcal{N}}\widetilde{x}^{k}\\
(\forall \xi\in\Xi)\quad 
f_{\xi}^{k+1}=P_{V^{+}_{\xi}}\widetilde{f}_{\xi}^{k}\\
\overline{x}^{k+1}=2x^{k+1}-x^{k}\\
\overline{f}^{k+1}=2f^{k+1}-f^{k},
\end{array}
\right.
\end{equation}
where $(x^0,f^0)\in \RR^{|{\mathcal{A}}||\Xi|} \times \RR^{|R||\Xi|}$, 
$(\overline{x}^{0},\overline{f}^{0})=(x^{0},f^{0})$, 
$(p^0,u^0)\in {\mathbb{R}}^{|{\mathcal{A}}||\Xi|}\times 
{\mathbb{R}}^{|{\mathcal{A}}||\Xi|}$, and $\tau \in 
\left]0,2\beta\right[$ and
$\gamma\in\RPP$ are such that 
\begin{equation}
\label{e:condpartr}
\tau\gamma\max\left\{1,\|N\|^{2}\right\}<1-\tau/2\beta.
\end{equation}
\begin{remark} The projections 
$(P_{\Theta_{\xi}})_{\xi\in\Xi}$,  
$P_{D^{|\Xi|}\cap \mathcal{N}}$, and 
$(P_{V^{+}_{\xi}})_{\xi\in\Xi}$ appearing in 
\eqref{alg_ejem_direct}, can be computed efficiently, as we detail 
below. 
\begin{enumerate}
	\item[(i)] Let $\xi\in \Xi$ and let
	$(x,u)\in \mathbb{R}^{|{\mathcal{A}}|} \times 
	\mathbb{R}^{|{\mathcal{A}}|}$. We deduce from 
	\cite[Proposition~29.3 \& Example~29.20]{19.Livre1} that 
	$P_{\Theta_{\xi}}(x,u)=(P_{a,\xi}(x,u))_{a\in\mathcal{A}}$, 
	where  
	\begin{equation*}
	(\forall a \in 
	{\mathcal{A}})\,\, P_{a,\xi}(x,u)=
	\begin{cases}
	\left(\frac{x_{a}+u_{a}-c_{a,\xi}}{2},\frac{x_{a}+u_{a}
		+c_{a,\xi}}{2}\right),
	&\mbox{if } u_{a}-x_{a}-c_{a,\xi}>0;\\
	(x_{a},u_{a}), &\mbox{otherwise.} 
	\end{cases}
	\end{equation*}
	\item[(ii)] Let $\xi\in \Xi$ and note 
	that
	\begin{equation*}
	V^{+}_{\xi}=\displaystyle\bigtimes_{(o,d)\in {\mathcal{O}}\times 
		{\mathcal{D}}} V_{od,\xi},
	\end{equation*}
	where, for every $(o,d)\in\mathcal{O}\times\mathcal{D}$, 
	$V_{od,\xi}:=\{f\in {\mathbb{R}}_{+}^{|R_{od}|}\,:\, 
	\sum_{r\in R_{od}} 
	f_{r}=h_{od,\xi}\}$. It follows from
	\cite[Proposition~29.3]{19.Livre1} that 
	\begin{equation*}
	P_{V_{\xi}^{+}}\colon 
	f=(f_{od})_{\substack{o\in\mathcal{O}\\d\in\mathcal{D}}}\mapsto
	(P_{V_{od,\xi}}f_{od})_{\substack{o\in\mathcal{O}\\d\in\mathcal{D}}}.
	\end{equation*}
	For every $(o,d)\in\mathcal{O}\times\mathcal{D}$, the projection 
	$P_{V_{od,\xi}}$ 
	can be computed efficiently by using the 
	quasi-Newton algorithm developed in 
	\cite{calculo_proj_simplex}.
	\item[(iii)]
	Note that
	\begin{equation*}
	D^{|\Xi|}\cap \mathcal{N}=\displaystyle\bigtimes_{a\in 
		{\mathcal{A}}} C_a,
	\end{equation*}
	where, for every $a\in\mathcal{A}$, $C_a=\{y\in 
	\left[0,M_{a}\right]^{|\Xi|}\,:\, 
	(\forall (\xi,\xi')\in \Xi^{2})\:\: 
	y_{\xi}=y_{\xi'}\}$.
	It follows from \cite[Proposition~29.3]{19.Livre1} that 
	$
	P_{D^{|\Xi|}\cap \mathcal{N}}\colon 
	x=(x_a)_{a\in\mathcal{A}}\mapsto
	(P_{C_a}x_a)_{a\in\mathcal{A}}
	$
	and
	\begin{equation}
	\label{proj_C_a}
	(\forall a\in\mathcal{A})\quad P_{C_{a}}\colon y\mapsto 
	\midd(0,\overline{y},M_{a})\boldsymbol{1},
	\end{equation}
	where $\boldsymbol{1}=(1,\ldots,1)^{\top}\in\RR^{|\Xi|}$, 
	$\overline{y}=\frac{1}{|\Xi|}\sum_{\xi \in \Xi} y_{\xi}$, and 
	$\midd(a,b,c)$ is the middle value among $a$, 
	$b$, and $c$. In order to prove \eqref{proj_C_a}, let $a\in 
	{\mathcal{A}}$, 
	$y\in 
	\RR^{|\Xi|}$,
	and set 
	$\widehat{\theta}:=\midd(0,\overline{y},M_{a})\in\left[0,M_a\right]$. 
	For every $x\in C_a$, we have $x=\eta\boldsymbol{1}$ for some 
	$\eta\in [0,M_{a}]$ and
	\begin{align*}
	(y-\widehat{\theta}\boldsymbol{1})^{\top}
	(x-\widehat{\theta}\boldsymbol{1})
	&=(\eta-\widehat{\theta})|\Xi|(\overline{y}-\widehat{\theta})\nonumber\\
	&=
	\begin{cases}
	\eta|\Xi|\overline{y},\quad&\text{if}\:\:\overline{y}<0;\\
	0,&\text{if}\:\:\overline{y}\in[0,M_{a}];\\
	(\eta-M_{a})|\Xi|(\overline{y}-M_{a}),&\text{if}\:\:\overline{y}>M_{a},
	\end{cases}
	\end{align*}
	which yields $(y-\widehat{\theta}\boldsymbol{1})^{\top}
	(x-\widehat{\theta}\boldsymbol{1})\le 0$ and obtain from 
	\cite[Theorem~3.16]{19.Livre1} that
	$P_{C_a}y=\widehat{\theta}\boldsymbol{1}$.
\end{enumerate}
\end{remark}

\subsubsection{Vector Subspace Primal-Dual Formulation}
For the second equivalent formulation of  
Problem~\ref{prob_transporte} 
consider the  closed vector subspace
\begin{equation*}
S=\left\{f\in {\mathbb{R}}^{|R||\Xi|} \,:\, (\forall \xi\in \Xi)(\forall 
(o,d)\in {\mathcal{O}}\times{\mathcal{D}})\, \displaystyle\sum_{r\in 
R_{od}} 
f_{r,\xi} =0\right\}
\end{equation*}
and let ${\widehat{f}}^{0}$ defined by
\begin{equation*}
(\forall \xi \in \Xi)(\forall (o,d)\in{\mathcal{O}}\times 
{\mathcal{D}})(\forall r\in 
R_{od})\quad \widehat{f}^{0}_{r,\xi}=h_{od,\xi}/{|R_{od}|},
\end{equation*}
which
satisfies
\eqref{rest1_prob_tr}. Then, under the notation in \eqref{e:defsform1}, 
the Problem~\ref{prob_transporte} is equivalent to
\begin{align*}
\label{prob_esto_aplic_sev}\tag{$\mathcal{P}_{V}$}
\mini_{(x,f)\in \mathcal{N}\times S} 
\widehat{F}(x,f+\widehat{f}^0)+
{G}(L(x,f+\widehat{f}^0))+{H}(x,f+\widehat{f}^0),
\end{align*}
where $\widehat{F}=\iota_{D^{|\Xi|}\times 				
{\mathbb{R}}_{+}^{|R||\Xi|}}$. Note that, the difference with respect 
to \eqref{prob_esto_aplic} is that in \eqref{prob_esto_aplic_sev}
we propose a vector subspace splitting on function $F$ defined in 
\eqref{e:defsform1}.

In addition, observe that $\widehat{F}(\cdot+(0,\widehat{f}^0))$ and 
${G}(\cdot+L(0,\widehat{f}^0))$ are lower semicontinuous, convex, 
and proper, and $H(\cdot+(0,\widehat{f}^0))$ 
is convex differentiable with $\beta^{-1}-$Lipschitz gradient.
Thus, \eqref{prob_esto_aplic_sev} 
satisfies the hypotheses of problem \eqref{e:pricp} with 
$V=\mathcal{N}\times 
S$ and $\ell=\iota_{\{0\}}$. Hence, by using 
\cite[Proposition~29.1(i)]{19.Livre1}, the algorithm in \eqref{alg_opt} 
with $T=\Id$ reduces to
\begin{equation}
\label{alg_ejem_2}
(\forall k\in {\mathbb{N}})\quad
\left\lfloor
\begin{array}{ll}
\widetilde{p}^{k}=p^{k}+\gamma \overline{x}^{k}\\
\widetilde{u}^{k}=u^{k}+\gamma N(\overline{f}^{k}+\widehat{f}^{0})\\
(\forall \xi\in \Xi)\quad
(p_{\xi}^{k+1},u_{\xi}^{k+1})=(\widetilde{p}_{\xi}^{k},
\widetilde{u}_{\xi}^{k})-\gamma 
P_{\Theta_{\xi}}(\gamma^{-1}(\widetilde{p}_{\xi}^{k},\widetilde{u}_{\xi}^{k}))\\
\widetilde{x}^{k}=x^{k}+\tau y^{k}-\tau P_{\mathcal{N}}( 
p^{k+1}+ 
(p_{\xi}Qx_{\xi}^{k})_{\xi\in\Xi})\\
\widetilde{f}^{k}=f^{k}+\tau g^{k}-\tau P_{S}(N^{\top}u^{k+1} + 
\psi(f^{k}+{\widehat{f}}^{0}))\\
z^{k+1}=P_{D^{|\Xi|}}\widetilde{x}^{k}\\
\ell^{k+1}=P_{\RR_{+}^{|R||\Xi|}} ({\widetilde{f}}^{k}+{\widehat{f}}^{0}) - 
{\widehat{f}}^{0}\\
x^{k+1}=P_{\mathcal{N}}z^{k+1}\\
f^{k+1}=P_{S}\ell^{k+1}\\
y^{k+1}=y^{k}+(x^{k+1}-z^{k+1})/{\tau}\\
g^{k+1}=g^{k}+(f^{k+1}-\ell^{k+1})/{\tau}\\
\overline{x}^{k+1}=2x^{k+1}-x^{k}\\
\overline{f}^{k+1}=2f^{k+1}-f^{k},
\end{array}
\right.
\end{equation}
where $(x^0,f^0)=(\overline{x}^{0},\overline{f}^{0})\in 
\mathcal{N}\times S$, $(p^0,u^0)\in 
{\mathbb{R}}^{|{\mathcal{A}}||\Xi|}\times 
{\mathbb{R}}^{|{\mathcal{A}}||\Xi|}$,
 $(y^{0},g^{0})\in\mathcal{N}^{\perp}\times S^{\perp}$, and $\tau \in 
		\left]0,2\beta\right[$ and  
		$\gamma \in\RPP$ satisfy \eqref{e:condpartr}.

\begin{remark}
The projections appearing in \eqref{alg_ejem_2} are explicit. Indeed,
for every $x\in {\mathbb{R}}^{|{\mathcal{A}}||\Xi|}$ and
$\xi\in\Xi$, we have that
$(P_{\mathcal{N}}x)_{\xi}=\frac{1}{|\Xi|}\sum_{\xi'\in \Xi} 
x_{\xi'}$ and $(P_{D^{|\Xi|}}x)_{\xi} = 
(\midd(0,x_{a,\xi},M_{a}))_{a\in{\mathcal{A}}}$. Moreover, for every 
$f\in 
{\mathbb{R}}^{|R||\Xi|}$ we have, for every 
$(o,d)\in\mathcal{O}\times\mathcal{D}$ and $r\in R_{od}$,  
$(P_{S}f)_r=(f_{r,\xi} - 
\frac{1}{|R_{od}|}\sum_{r'\in R_{od}} 
f_{r',\xi})_{\xi\in\Xi}$ and 
$(P_{\RR_{+}^{|R||\Xi|}}f)_r=(\max\{0,f_{r,\xi}\})_{ \xi\in\Xi}$.
\end{remark}

\subsubsection{Numerical Experiences}
In this subsection we compare the efficiency of the algorithms in
\eqref{alg_ejem_direct} and \eqref{alg_ejem_2} to solve the arc 
capacity expansion problem. We consider two networks used in 
\cite{aplicacion_julio}. Network 1, represented 
in Figure~\ref{fig:grafo1}, has $7$ arcs 
and $6$ paths and Network 2, in Figure~\ref{fig:grafo2}, has
$19$ arcs and $25$ paths.
\begin{figure}[H]
\begin{center}
	\includegraphics[scale=0.6]{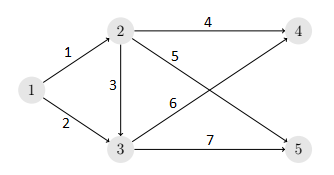}
\end{center}
\caption{Network 1 \cite{aplicacion_julio}}
\label{fig:grafo1}
\end{figure}
\vspace{-0.5cm}
\begin{figure}[H]
\centering
\includegraphics[scale=0.6]{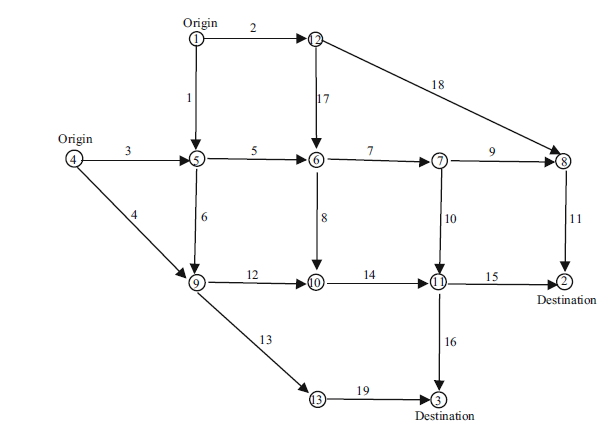}
\caption{Network 2 \cite{aplicacion_julio}}
\label{fig:grafo2}
\end{figure}
In our numerical experiences we set $p_{\xi}\equiv 1/{|\Xi|}$,
$(c_{\xi})_{\xi\in\Xi}$ as a sample of the random variable $100\cdot 
b+d\cdot \mbox{beta}(2,2)$, where
\begin{equation*}
b=
\left\{\begin{array}{ll}
(1,1,2,2,1,1,1) \quad\text{in Network~1}\\
(10, 4.4, 1.4, 10, 3, 
4.4, 10, 2, 2, 4, 7, 7,7, 7, 4, 3.5, 2.2, 4.4, 7)\,\,\text{in Network~2}
\end{array}
\right.
\end{equation*}
and
\begin{equation*}
d=\begin{cases}
(15,15,30,30,15,15,15) \quad\text{in Network~1}\\
(15, 6.6, 2.1, 15, 4.5, 6.6, 15, 3, 3, 6, 10.5,\\
\hspace{2cm}10.5, 10.5, 10.5, 6, 5.25, 3.3, 6.6, 10.5)
\quad\text{in Network~2},
\end{cases}
\end{equation*}
and $(h_{\xi})_{\xi\in\Xi}$ as a sample of the random variable
\begin{equation*}
h=
\begin{cases}
(h_{1,4},h_{1,5})\sim(150,180) + 
(120,96)\cdot\mbox{beta}(5,1) \quad\text{in Network~1}\\
(h_{1,2},h_{1,3},h_{4,2},h_{4,3})
\sim(300,700,500,350)\\
\qquad\qquad\qquad\qquad+ (120,120,120,120)\cdot 
\mbox{beta}(50,10) 
\quad\text{in Network~2}.
\end{cases}
\end{equation*}
We set the capacity limits  $(M_a)_{a\in\mathcal{A}}=\theta d$,
where $\theta=40$ in Network~1 and $\theta=200$ 
in Network~2. We also set $Q=\Id\in 
\RR^{|{\mathcal{A}}|\times|{\mathcal{A}}|}$ and, for every
$ a\in {\mathcal{A}}$ and $\xi \in \Xi$, the travel time function is $ 
t_{a,\xi}\colon u \mapsto 
\eta_{a}(1+0.15\,u/c_{a,\xi})$, where 
\begin{equation*}
\eta=
\begin{cases}
(6,4,3,5,6,4,1) \quad\text{in Network~1}\\
(7, 9, 9, 12, 3, 9, 5, 13, 5, 9, 9, 10, 9, 6, 9, 8, 7, 14, 11)\quad\text{in 
Network~2}.
\end{cases}
\end{equation*}
We implement the algorithms in \eqref{alg_ejem_direct} and 
\eqref{alg_ejem_2} for different values of $|\Xi|$. 
We obtain the following results by considering
$ 20 $ random realizations of $(c_{\xi})_{\xi\in\Xi}$ and 
$(h_{\xi})_{\xi\in\Xi}$.
\begin{table}[H]
\caption{Average execution time (number of iterations) with 
	relative 
	error tolerance $e=10^{-10}$}
\tabcolsep=0.11cm
\begin{tabular}{ccccc}
	\hline
	\multicolumn{1}{|c|}{Network 1} & \multicolumn{1}{c|}{$|\Xi|=1$} & 
	\multicolumn{1}{c|}{$|\Xi|=3$} & \multicolumn{1}{c|}{$|\Xi|=5$} & 
	\multicolumn{1}{c|}{$|\Xi|=10$} \\ \hline
	\multicolumn{1}{|c|}{algorithm \eqref{alg_ejem_direct}} & 
	\multicolumn{1}{c|}{0.082 (1143)} & \multicolumn{1}{c|}{0.731 
	(3217)} 
	& \multicolumn{1}{c|}{1.363 (4199)} & \multicolumn{1}{c|}{4.388 
		(5698)} \\ \hline
	\multicolumn{1}{|c|}{algorithm \eqref{alg_ejem_2}} & 
	\multicolumn{1}{c|}{0.075 (1160)} & \multicolumn{1}{c|}{0.607 
	(3284)} 
	& \multicolumn{1}{c|}{1.098 (4294)} & \multicolumn{1}{c|}{3.485 
		(5804)} \\ \hline
	\multicolumn{1}{|c|}{\% improvement of time} & 
	\multicolumn{1}{c|}{8.54\%} & \multicolumn{1}{c|}{16.96\%} & 
	\multicolumn{1}{c|}{19.44\%} & \multicolumn{1}{c|}{20.58\%} \\ \hline
	\multicolumn{1}{l}{} & \multicolumn{1}{l}{} & \multicolumn{1}{l}{} & 
	\multicolumn{1}{l}{} & \multicolumn{1}{l}{} \\ \hline
	\multicolumn{1}{|c|}{Network 2} & \multicolumn{1}{c|}{$|\Xi|=1$} & 
	\multicolumn{1}{c|}{$|\Xi|=3$} & \multicolumn{1}{c|}{$|\Xi|=5$} & 
	\multicolumn{1}{c|}{$|\Xi|=10$} \\ \hline
	\multicolumn{1}{|c|}{algorithm \eqref{alg_ejem_direct}} & 
	\multicolumn{1}{c|}{0.864 (4801)} & \multicolumn{1}{c|}{10.195 
		(27285)} & \multicolumn{1}{c|}{16.166 (27660)} & 
	\multicolumn{1}{c|}{45.327 (39790)} \\ \hline
	\multicolumn{1}{|c|}{algorithm \eqref{alg_ejem_2}} & 
	\multicolumn{1}{c|}{0.637 (4816)} & \multicolumn{1}{c|}{7.627 
	(28147)} 
	& \multicolumn{1}{c|}{12.069 (28885)} & \multicolumn{1}{c|}{33.204 
		(40848)} \\ \hline
	\multicolumn{1}{|c|}{\% improvement of time} & 
	\multicolumn{1}{c|}{26.27\%} & \multicolumn{1}{c|}{25.19\%} & 
	\multicolumn{1}{c|}{25.34\%} & \multicolumn{1}{c|}{26.75\%} \\ \hline
\end{tabular}
\end{table}
\raggedbottom
Note that the algorithm with vector subspaces in \eqref{alg_ejem_2} is 
more time efficient 
compared to the classical primal-dual algorithm in 
\eqref{alg_ejem_direct}. Indeed, the percentage of improvement 
reaches up to $26.75\%$, for the larger dimensional case of  
Network~2 and $\Xi$. It is worth to notice that the number of iterations 
is lower 
in average for the approach without vector subspaces, but it is 
explained by the subroutines that compute the projections onto 
$(V_{\xi}^+)_{\xi\in\Xi}$,
which lead to a larger computational time by iteration.
In order to show
the difference of both algorithms, in Figure~\ref{fig:ntw1} and 
Figure~\ref{fig:ntw2} we illustrate 
the relative 
error  depending on the execution time for the best and the worst 
instance respect to convergence time.
\vspace{-0.3cm}
\begin{figure}[H]
\centering
\begin{subfigure}{.5\textwidth}
  \centering
  \includegraphics[width=1\linewidth]{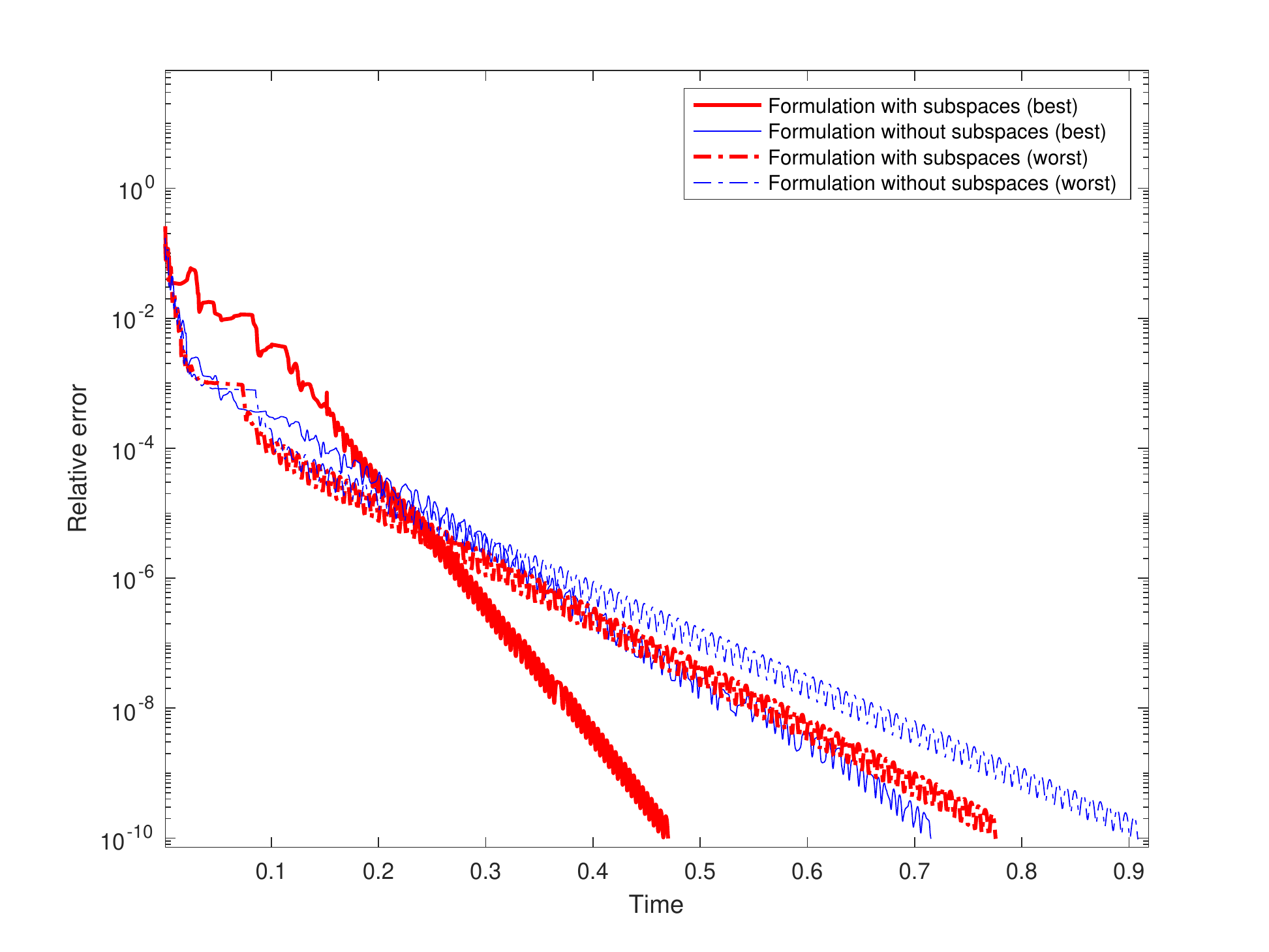}
\end{subfigure}%
\begin{subfigure}{.5\textwidth}
  \centering
  \includegraphics[width=1\linewidth]{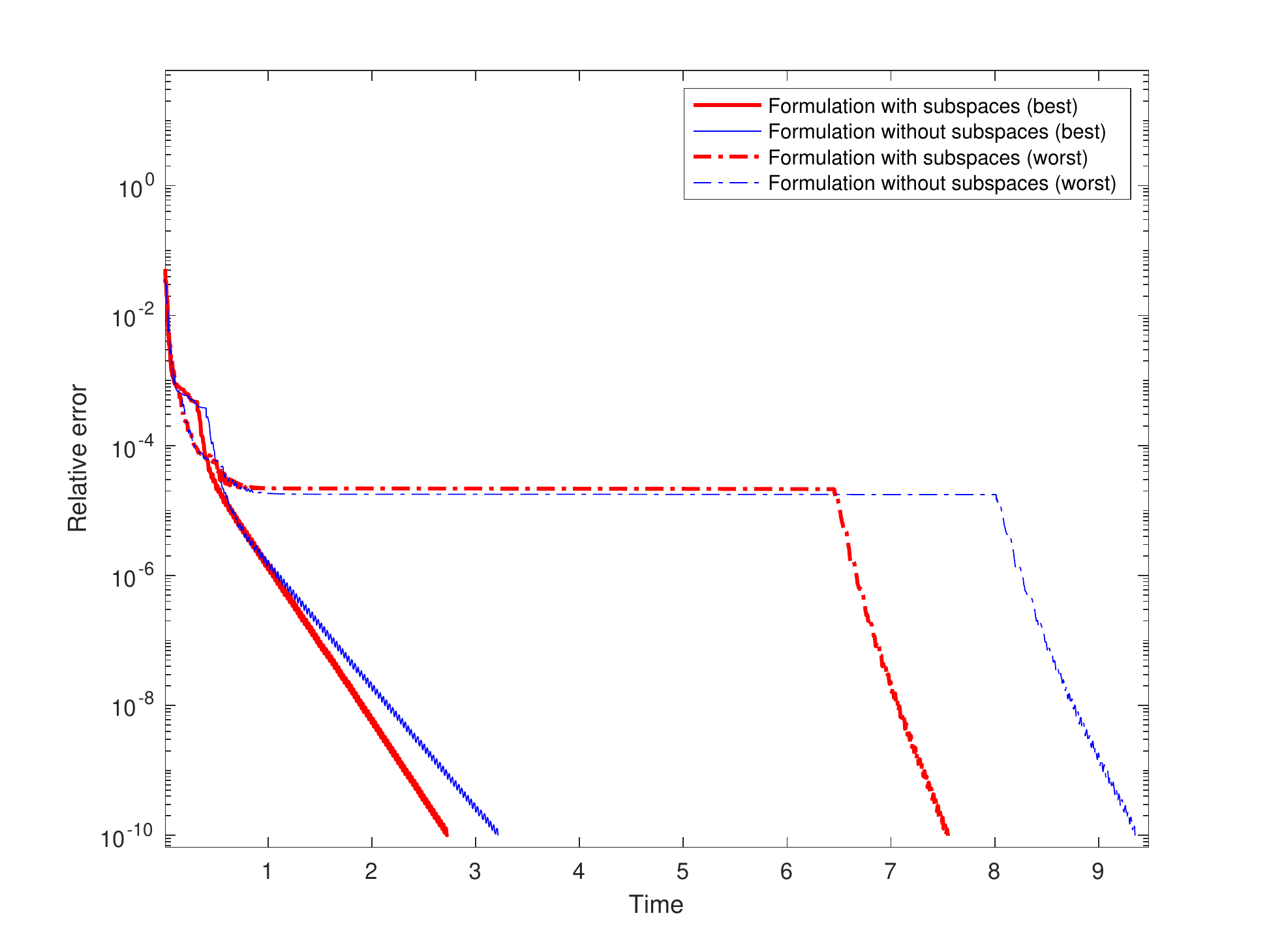}
\end{subfigure}%
\caption{Relative error in Network 1 (semilog) with $|\Xi|=3$ 
	(left) and $|\Xi|=10$ (right)}
\label{fig:ntw1}
\end{figure}
\vspace{-0.5cm}
\begin{figure}[H]
\centering
\begin{subfigure}{.5\textwidth}
  \centering
  \includegraphics[width=1\linewidth]{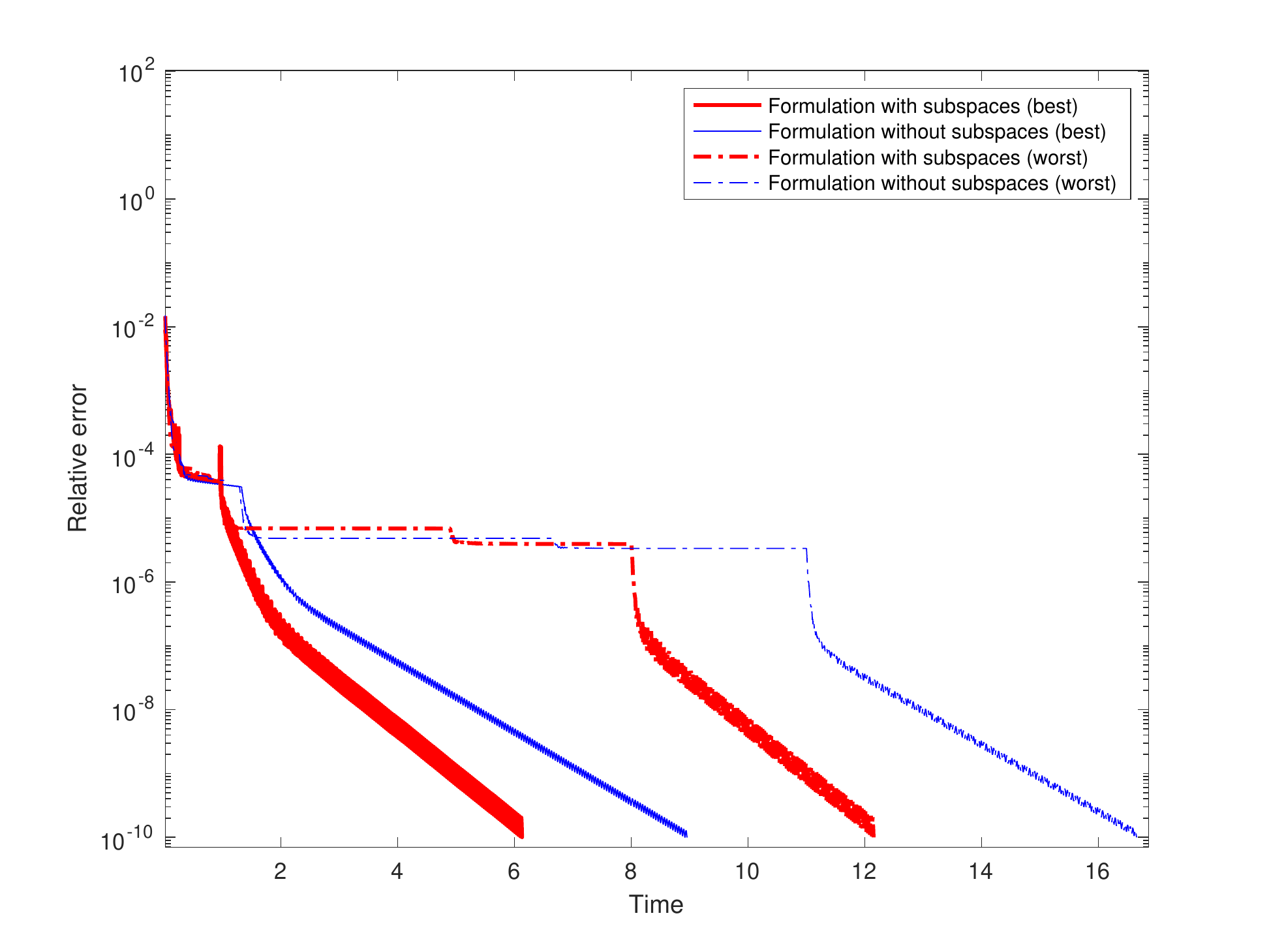}
\end{subfigure}%
\begin{subfigure}{.5\textwidth}
  \centering
  \includegraphics[width=1\linewidth]{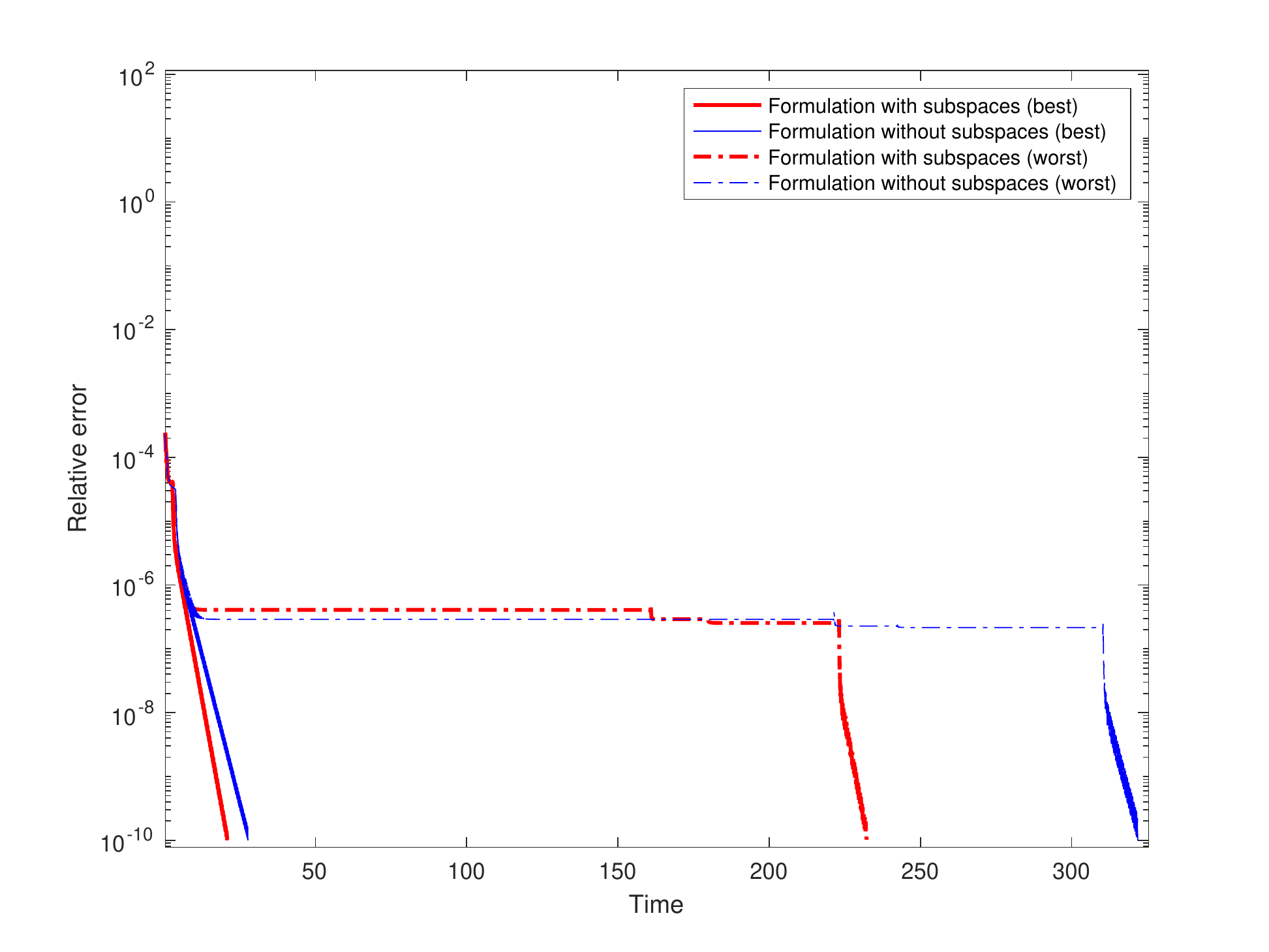}
\end{subfigure}%
\caption{Relative error in Network 2 (semilog) with $|\Xi|=3$ 
	(left) and $|\Xi|=10$ (right)}
\label{fig:ntw2}
\end{figure}
\subsection{Stationary MFG with non-local 
couplings}
\label{sec:numer_mfg}
Let us consider the following second order ergodic MFG
system \cite{Lasry_Lions_2006i,Lasry_Lions_2007}
\begin{equation}\label{MFG_ergodic_system}\begin{array}{rll}
- \nu \Delta u + \mathscr{H}(x,\nabla u) + \lambda \; 
&= \phi(x,m)  &\text{in } \TT^d, \\[5pt]
- \nu \Delta m - \text{div}\left( \partial_{p} \mathscr{H}(x, \nabla 
u)m\right) \; \; &=0 \qquad &\text{in } \TT^d, \\[5pt]
\int_{\TT^d} u(x) {\rm d} x=0, \:\: m \geq 0,  
&\:\int_{\TT^d}m(x) 
{\rm d} x&=1. 
\end{array}
\end{equation}
In the system above, $\TT^d$ denotes the $d$-dimensional torus and  
the unknowns are $u: \TT^d \to \RR$, $m: \TT^d \to \RR$, and 
$\lambda \in \RR$. The function $\mathscr{H}: \TT^d \times \RR^d \to 
\RR$ is the so-called Hamiltonian function,   $\phi: \TT^d \times 
L^1(\TT^d) \to \RR$ and  $\nu \in \RPP$. We assume that, for all $x\in 
\TT^d$, the function $p \mapsto \mathscr{H}(x,p)$ is 
convex. Existence and uniqueness results for solutions $(u,m)$ to 
system \eqref{MFG_ergodic_system} have been shown, under 
suitable assumptions,  in 
\cite{Lasry_Lions_2006i,Lasry_Lions_2007,Bardi_Feleqi_2016,meszaros_silva_2018}.
 The function $\phi$ is called coupling or interaction term and system 
\eqref{MFG_ergodic_system} is called a MFG system with {\it 
non-local coupling}. In contrast, the {\it local coupling} case 
corresponds to \eqref{MFG_ergodic_system} when  
$\phi: \TT^d \times \RP \to \RR$ and the right-hand-side of the first 
equation is replaced by 
$\phi(x,m(x))$. 

For the sake of simplicity, we will assume that  
\begin{equation}
\label{e:defk}
\mathscr{H}\colon(x,p) \mapsto \frac{|p|^2}{2} \quad 
\text{and} \quad \phi\colon (x,m) \mapsto \int_{\TT^d} k(x,y) m(y) {\rm 
d}y+k_0(x),
\end{equation}
where $k_0: \TT^d \to \RR$ is a Lipschitz function  and $k: 
\TT^d 
\times \TT^d \to \RR$ is a smooth function satisfying that, for every 
finite selection of 
points $(x_r)_{1\le r\le \ell}$ in $\TT^d$ and for every $m\colon 
\TT^d\to\RR$, we have
\begin{equation}
\label{e:PDS}
\sum_{r=1}^{\ell}\sum_{s=1}^{\ell}k(x_r,x_s)m(x_r)m(x_s)\ge 0
\end{equation}
and $k(x_r,x_s)=k(x_s,x_r)$ for every $r$ and $s$ in $\{1,\ldots,\ell\}$.
The condition \eqref{e:PDS} is known as the Positive Definite 
Symmetric (PDS)
property \cite[Section~6.2]{Mohri} and implies  that $k$ satisfies the 
monotonicity 
condition
\begin{equation}
\int_{\TT^d \times \TT^d} k(x,y)(m_1(x) - m_2(x))(m_1(y) - m_2(y))  
{\rm d} x {\rm d} y \geq 0,
\end{equation}
for all $m_1$ and $m_2$ in $L^1(\TT^d)$ such that 
$\int_{\TT^d} 
m_1 (x) {\rm d} x=\int_{\TT^d} m_2 (x) {\rm d} x=1$.
Under the previous assumptions, the results in 
\cite{Lasry_Lions_2006i,Lasry_Lions_2007} ensure the existence of 
an 
unique classical solution $(u,m)$ to \eqref{MFG_ergodic_system}.
Moreover, it follows from \eqref{e:defk} and \eqref{e:PDS} that
\eqref{MFG_ergodic_system} admits a variational formulation, i.e., it 
corresponds to the optimality condition of an optimization problem 
\cite{Lasry_Lions_2007}. 

The numerical approximation of solutions to variational MFGs with 
local 
couplings has been 
addressed in 
\cite{aime10,benamoucarlier15,Benamou_et_al_2017,Andreev16,BAKS}.
We focus in the case when $\phi(x,\cdot)$ is non-local
in which previous algorithms are not applicable.
In this context, 
\cite{Nurbekyan_et_all_2020} solves \eqref{MFG_ergodic_system} via 
approximated 
solutions to 
the analogous time-dependent MFG system as the time horizon goes 
to infinity. In contrast, we propose algorithms following a direct 
approach based on the 
variational formulation 
of \eqref{MFG_ergodic_system}, in which the vector 
subspace structure arises.

\subsubsection{Finite difference approximation}
We now introduce  
a discretization of system  \eqref{MFG_ergodic_system} in the 
two-dimensional case, following 
\cite{Achdou_Capuzzo_Dolcetta_2010}.  Let $N \in \NN$, set $h= 
1/N$, set $\mathcal{I}_N=\{0,\ldots,N-1\}$, and consider 
the uniform grid $\TT^2_{h}= \{x_{i,j}= (hi,hj) \; | \; (i, j)\in 
\mathcal{I}_N^2\}$ on $\TT^2$. Let us define $\M_{h}$ as the set of 
real valued functions defined on  $\TT^2_{h}$, $\mathcal{W}_h= 
\M_h^4$,
$\boldsymbol{1}\in\mathcal{M}_h$ as the constant function 
equals 
to $1$, and set $\Y_h= \{z \in \M_h \; | \;  
\sum_{i,j}z_{i,j}=0\}$, where we denote, for every 
$(i,j)\in\mathcal{I}_N^2$, $z_{i,j}=z(x_{i,j})$. 
We define $D_1\colon \mathcal{M}_h\to\mathcal{Y}_h$,
 $D_2\colon \mathcal{M}_h\to\mathcal{Y}_h$, $D_h\colon 
 \mathcal{M}_h\to\mathcal{Y}_h^4$, $\Delta_h\colon 
 \mathcal{M}_h\to\mathcal{Y}_h$, and 
 $\mbox{div}_h\colon\mathcal{W}_h\to 
 \mathcal{Y}_h$ by
\begin{equation}
\label{e:defdiscr}
\begin{array}{l}
(D_1 z)_{i,j}= \frac{z_{i+1,j}-z_{i,j}}{h}, \quad (D_2 
z)_{i,j}=\frac{z_{i,j+1}-z_{i,j}}{h},\\[4pt]
[D_hz]_{i,j}=((D_1 z)_{i,j}, (D_1 z)_{i-1,j}, (D_2 z)_{i, j}, (D_2 
z)_{i,j-1}),\\[4pt]
(\Delta_h z)_{i,j}= \frac{z_{i-1,j} 
+z_{i+1,j}+z_{i,j-1}+z_{i,j+1}-4z_{i,j}}{h^2},\\[4pt]
(\text{div}_{h}(w))_{i,j}= (D_1 w^1)_{i-1, j} + 
(D_1w^2)_{i,j}+(D_2w^3)_{i,j-1}+ (D_2w^4)_{i,j},
\end{array}
\end{equation}
where previous definitions hold for every $i$ and $j$ in 
$\mathcal{I}_N$, $z\in\mathcal{M}_h$, $w\in\mathcal{W}_h$, and the 
sums between the 
indexes are taken modulo $N$. 
We also define $K_h\colon\M_h\to\M_h$ and $K_0\in\M_h$ by
\begin{equation}
\label{e:Ks}
(\forall m\in\M_h)(\forall (i,j)\in\mathcal{I}_N^2)\quad 
\begin{cases}
(K_hm)_{i,j}=h^2\sum_{i',j'} k(x_{i,j},x_{i',j'})m_{i',j'},\\
(K_0)_{i,j}=k_0(x_{i,j}).
\end{cases}
\end{equation}
Observe that condition in \eqref{e:PDS} implies that 
$K_h$ is positive semidefinite.
Let us consider the cone $C=\RP\times\RM\times\RP\times\RM$. The 
orthogonal projection onto $C^{N^2}$ is given by
\begin{equation}
\label{e:defC}\begin{array}{rcl}
(\forall w\in \mathcal{W}_h)(\forall (i,j)\in\mathcal{I}_N^2) \quad 
(P_{C^{N^2}}(w))_{i,j}&=&\big([w_{i,j}^1]_{+},[w_{i,j}^2]_{-},[w_{i,j}^3]_{+},
 
[w_{i,j}^4]_{-}\big),
\end{array}\end{equation}
where, for every $\xi\in\RR$, $[\xi]_+=\max\{0,\xi\}$ and
$[\xi]_-=\min\{0,\xi\}$.
The finite difference scheme  proposed in 
\cite{Achdou_Capuzzo_Dolcetta_2010} to approximate 
\eqref{MFG_ergodic_system} is the following: for every $i$ and $j$ in 
$\mathcal{I}_N$,
\begin{equation}\label{finite_difference_scheme} 
\begin{array}{rll}
-\nu(\Delta_h u)_{i,j} 
+\frac{1}{2}\big|P_{C}\big(-[D_hu]_{i,j}\big)\big|^2+\lambda&= 
(K_hm)_{i,j} +(K_0)_{i,j},  \\[6pt]
-\nu(\Delta_h m)_{i,j} +\big(\text{div}_h\big(m 
P_{C^{N^2}}\big(-[D_hu 
]\big)\big)\big)_{i,j}&=0, \\[6pt]
m_{i,j} \geq 0, \quad \displaystyle h^2 \sum_{i,j}m_{i,j}=1, \quad 
\sum_{i,j}u_{i,j}&=0, 
\end{array}
\end{equation}
where $(m,u)\in \mathcal{M}_h^2$ and $\lambda\in \RR$
are unknowns.

In order to obtain a variational interpretation of 
\eqref{finite_difference_scheme}, note that 
$(\M_h, \scal{\cdot}{\cdot})$, $(\mathcal{W}_h, 
\scal{\cdot}{\cdot}_{\mathcal{W}_h})$, and $(\Y_h, 
\scal{\cdot}{\cdot})$ are Hilbert spaces, where
$$\begin{array}{rll}
\scal{\cdot}{\cdot}\colon (m_1,m_2)&\mapsto  
\sum_{i,j=0}^{N-1} (m_1)_{i,j}(m_2)_{i,j},\\[6pt] 
\; \; \scal{\cdot}{\cdot}_{\mathcal{W}_h}\colon (w_1,w_2)&\mapsto 
\sum_{\ell=1}^{4} 
\scal{w_1^{\ell}}{w_2^{\ell}}.
\end{array}
$$
The adjoint operators $(-\Delta_h)^*: \Y_h \to \M_{h}$ 
and $(\mbox{div}_{h})^*: \Y_h \to  \mathcal{W}_h$  are given by
\begin{equation}
\label{e:adj}
(( i,j)\in \mathcal{I}_N^2) \; \;  
\left((-\Delta_h)^*u\right)_{i,j}= -(\Delta_{h} u)_{i,j}, \; \; 
((\text{div})^*u)_{i,j}= - [D_hu]_{i,j}. 
\end{equation}

\begin{remark}\label{injectivity_of_the_operators}
From the definition of $[D_h u]$ and the identity 
$$
\sum_{i,j=0}^{N-1} u_{i,j} (-\Delta_{h} u)_{i,j}= 
\sum_{i,j=0}^{N-1}\left[(D_{1}u)_{i,j}^2  + (D_{2}u)_{i,j}^2 \right],
$$
we have that both, $(\mbox{div}_{h})^*$ and 
$(-\Delta_h)^*$ are injective operators. Thus, we also 
have that both, $\mbox{div}_{h}$ and $-\Delta_h${\color{blue},}
are  surjective operators. 
\end{remark}

\subsubsection{Variational formulation, existence, and 
uniqueness}

Consider the function $b\colon \RR\times\RR^4\to\RX$ defined by 
\begin{equation}
\label{e:defb}
b\colon (\eta,\omega)\mapsto 
\begin{cases}
\frac{|\omega|^2}{2\eta},\quad &\text{ if }\:\: \eta>0\:\:\text{ and 
}\:\:\omega\in C;\\[2mm]
0,\quad &\text{ if }\:\: (\eta,\omega)=(0,0);\\
\pinf,&\text{ otherwise.}
\end{cases}
\end{equation}
Define the functions $B_h\colon \M_h \times \mathcal{W}_h \to \RX$ 
and $\Phi_h \colon \M_h \to \RR$ by
$$
\begin{array}{l}
B_h\colon (m, w) \mapsto \sum_{i,j=0}^{N-1}b(m_{i,j}, w_{i,j})    \\[6pt]
\Phi_{h} \colon m\mapsto  \frac{1}{2}\scal{m}{K_hm}+
\scal{K_0}{m},\\[6pt]
\end{array}
$$
where $K_0$ and $K_h$ are defined in \eqref{e:Ks}.
Note that, since $k$ is a PDS kernel, the function $\Phi_h$ is convex. 
We consider now the optimization problem
\begin{align}\label{discrete_optimization_problem}
\min_{(m,w)\in \M_h\times \mathcal{W}_h} \; B_h(m,w)+\Phi_h(m)\\
- \nu \Delta_h m + \mbox{div}_{h} w =0, \nonumber\\
h^2 \sum_{i,j} m_{i,j}=1.  \nonumber
\end{align}

We first provide existence and uniqueness of the solution to 
\eqref{discrete_optimization_problem} and to 
\eqref{finite_difference_scheme}, by assuming $\nu>0$ and without 
any strict convexity assumption. 
This result is interesting in its own right and the proof of the 
uniqueness does not follow the standard Lasry-Lions monotonicity 
argument (see e.g. \cite[Proposition 
3]{Achdou_Capuzzo_Dolcetta_2010}).
\begin{proposition} 
\label{p:uniq}
Let $\nu>0$. Then, there exists a unique solution 
$(\widehat{m},\widehat{u}, \widehat{\lambda})$ to system 
\eqref{finite_difference_scheme}. Moreover, $\widehat{m}$ is strictly 
positive, and
$(\widehat{m}, 
\widehat{m} P_{C}(-[D\widehat{u}]))$ is the unique solution to 
 \eqref{discrete_optimization_problem}.
\end{proposition}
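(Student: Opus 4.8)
I would treat the optimization problem \eqref{discrete_optimization_problem} as the primary object, prove it has a unique minimizer $(\widehat m,\widehat w)$ by an argument that does not rely on strict convexity, and then transfer existence and uniqueness to \eqref{finite_difference_scheme} through the Karush--Kuhn--Tucker conditions. Existence of a minimizer I would get by the direct method: $(\boldsymbol{1},0)$ is feasible, since $\Delta_h\boldsymbol{1}=0$, $\text{div}_h0=0$, and $h^2\sum_{i,j}1=h^2N^2=1$, with value $0+\Phi_h(\boldsymbol{1})<\pinf$, so the infimum is finite; along a minimizing sequence $(m^n,w^n)$ the component $m^n$ stays in the compact probability simplex $\{m:\,m_{i,j}\ge 0,\ h^2\sum_{i,j}m_{i,j}=1\}$ (forcing $m^n_{i,j}\le h^{-2}$), the value $B_h(m^n,w^n)$ is bounded above because $\Phi_h$ is continuous, hence bounded below, on that simplex, and then \eqref{e:defb} gives $|w^n_{i,j}|^2\le 2m^n_{i,j}\,b(m^n_{i,j},w^n_{i,j})\le 2h^{-2}B_h(m^n,w^n)$, so $(w^n)$ is bounded too; lower semicontinuity of $B_h+\Phi_h$ (both lie in $\Gamma_0$, $B_h$ being built from the perspective function $b$) and closedness of the affine feasible set then produce a minimizer.

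Strict positivity of $\widehat m$ I would obtain by a discrete maximum principle. Any minimizer satisfies $\widehat w_{i,j}\in C$ for every $(i,j)$ (finiteness of $b$ gives this when $\widehat m_{i,j}>0$, and $\widehat w_{i,j}=0\in C$ when $\widehat m_{i,j}=0$). If $Z=\{(i,j):\widehat m_{i,j}=0\}$ were nonempty it would be a proper subset of the connected grid $\mathcal{I}_N^2$ because $h^2\sum_{i,j}\widehat m_{i,j}=1$, so some $(i_0,j_0)\in Z$ is adjacent to a node with $\widehat m>0$; at $(i_0,j_0)$ the equality $\widehat w_{i_0,j_0}=0$ forces $-\nu(\Delta_h\widehat m)_{i_0,j_0}=-\nu h^{-2}(\widehat m_{i_0-1,j_0}+\widehat m_{i_0+1,j_0}+\widehat m_{i_0,j_0-1}+\widehat m_{i_0,j_0+1})<0$, while the explicit form \eqref{e:defdiscr} of $\text{div}_h$ together with the sign pattern $\widehat w\in C^{N^2}$ makes $(\text{div}_h\widehat w)_{i_0,j_0}$ a sum of nonpositive terms; this contradicts the Fokker--Planck constraint $-\nu\Delta_h\widehat m+\text{div}_h\widehat w=0$ at that node, so $\widehat m>0$.

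The step I expect to be the main obstacle is uniqueness of the minimizer, since $\Phi_h$ need not be strictly convex (only $K_h\succeq 0$), so the standard Lasry--Lions monotonicity argument is unavailable. I would argue: if $(\widehat m,\widehat w)$ and $(\widehat m',\widehat w')$ both minimize, their midpoint is optimal, so convexity of $B_h$ is tight along the joining segment, hence termwise $b$ is affine on each segment $[(\widehat m_{i,j},\widehat w_{i,j}),(\widehat m'_{i,j},\widehat w'_{i,j})]$; since $\widehat m,\widehat m'>0$ we sit in $\{\eta>0,\ \omega\in C\}$, where $b$ is the perspective of $\omega\mapsto\tfrac12|\omega|^2$ and its Hessian is positive semidefinite with kernel exactly the radial direction $(\eta,\omega)$, so affinity forces that segment to be radial, whence $\widehat w_{i,j}/\widehat m_{i,j}=\widehat w'_{i,j}/\widehat m'_{i,j}=:v_{i,j}\in C$. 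Then $\widehat m$ and $\widehat m'$ both solve the \emph{same} linear system $-\nu\Delta_h\mu+\text{div}_h(\mu\,v)=0$, $h^2\sum_{i,j}\mu_{i,j}=1$, where $\mu\,v$ is the componentwise product. By \eqref{e:adj}, the adjoint of $\mu\mapsto-\nu\Delta_h\mu+\text{div}_h(\mu\,v)$ sends $\phi$ to the function with value $-\nu(\Delta_h\phi)_{i,j}-\langle v_{i,j},[D_h\phi]_{i,j}\rangle$ at $(i,j)$, annihilates constants, and — thanks to $\nu>0$ and the sign pattern $v\in C^{N^2}$ — is the negative of the generator of a continuous-time Markov chain on $\mathcal{I}_N^2$ with strictly positive rates to all four neighbors, hence irreducible on the connected torus grid; so its kernel is one-dimensional, the Fokker--Planck operator has one-dimensional kernel spanned by a strictly positive vector, and since $\widehat m-\widehat m'$ lies in that kernel with zero coordinate sum it vanishes, giving $\widehat m=\widehat m'$ and then $\widehat w=\widehat w'$. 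Note that the $\Phi_h$ term plays no role here, which is exactly why the monotonicity argument is not needed.

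Finally I would connect the two problems. The constraints of \eqref{discrete_optimization_problem} are affine and are met at a point of $\operatorname{ri}\dom B_h$ (take $w$ with small norm and each $w_{i,j}$ in the interior of $C$, solve $\nu\Delta_h m_0=\text{div}_h w$, which is possible since $\text{div}_h w\in\Y_h=\ran\Delta_h$ by Remark~\ref{injectivity_of_the_operators}, and add a constant to $m_0$ to normalize the mass; for small $\|w\|$ the resulting $m$ is strictly positive), so KKT multipliers $\widehat u\in\Y_h$ and $\widehat\lambda\in\RR$ exist. Writing out the subdifferential of $B_h+\Phi_h$ — using $\nabla\Phi_h(m)=K_hm+K_0$, the relation $q\in\partial_\omega b(\eta,\omega)\Leftrightarrow\omega=\eta P_C q$ on $\{\eta>0\}$ together with $\sup_{\omega\in C}\big(\langle\omega,q\rangle-b(\eta,\omega)\big)=\tfrac{\eta}{2}|P_Cq|^2$, and the adjoints \eqref{e:adj} — one checks that the KKT system coincides with \eqref{finite_difference_scheme}, with $\widehat w_{i,j}=\widehat m_{i,j}P_C(-[D_h\widehat u]_{i,j})$ and $\widehat u,\widehat\lambda$ suitable rescalings of the multipliers. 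Hence $(\widehat m,\widehat u,\widehat\lambda)$ solves \eqref{finite_difference_scheme} and $(\widehat m,\widehat m P_C(-[D_h\widehat u]))=(\widehat m,\widehat w)$ is the unique minimizer. Conversely, since KKT is sufficient for this convex program, any solution $(\widetilde m,\widetilde u,\widetilde\lambda)$ of \eqref{finite_difference_scheme} yields the optimal pair $(\widetilde m,\widetilde m P_C(-[D_h\widetilde u]))=(\widehat m,\widehat w)$, so $\widetilde m=\widehat m$ and, dividing by $\widehat m>0$, $P_C(-[D_h\widetilde u])=P_C(-[D_h\widehat u])$; subtracting the two discrete Hamilton--Jacobi equations leaves $-\nu\Delta_h(\widetilde u-\widehat u)+(\widetilde\lambda-\widehat\lambda)=0$, summation over $\mathcal{I}_N^2$ forces $\widetilde\lambda=\widehat\lambda$, and then $-\Delta_h(\widetilde u-\widehat u)=0$ with $\sum_{i,j}\widetilde u_{i,j}=\sum_{i,j}\widehat u_{i,j}=0$ (equivalently, injectivity of $(-\Delta_h)^*$ from Remark~\ref{injectivity_of_the_operators}) gives $\widetilde u=\widehat u$. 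The only delicate routine part is the sign bookkeeping in $\text{div}_h$ and $[D_h\cdot]$, which must be arranged so that $\widehat w\in C^{N^2}$ simultaneously yields the sign of $(\text{div}_h\widehat w)_{i_0,j_0}$ in the positivity step and the Markov-generator structure in the uniqueness step.
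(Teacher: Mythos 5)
Your proposal is correct, and it reaches the conclusion by a genuinely different route than the paper, most visibly in the uniqueness step. The paper first proves that the Lagrange multiplier pair is unique and shared by \emph{all} minimizers (injectivity of $(u,\lambda)\mapsto \nu(-\Delta_h)^*u+\lambda\mathbf{1}$ via Remark~\ref{injectivity_of_the_operators}, combined with the fact that the multiplier set of a convex program does not depend on the chosen solution), and only then studies the linear operator $E\colon m\mapsto-\nu\Delta_h m+\mathrm{div}_h\big(mP_C(-[D_h\widehat u])\big)$ whose drift is built from the unique multiplier $\widehat u$; you instead bypass multipliers entirely by exploiting that the objective is affine along the segment joining two minimizers, so that each perspective term $b$ in \eqref{e:defb} is affine there, and the kernel of its Hessian (the radial direction) forces all minimizers to share the velocity field $v=w/m$ — this is exactly the ingredient that replaces the paper's multiplier-uniqueness lemma, and it makes transparent why no strict convexity of $\Phi_h$ is needed. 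After that, both arguments rest on the same linear-algebra fact: $\dim\ker E=\dim\ker E^*=1$, with $\ker E^*$ consisting of constants; the paper proves the latter by the explicit path-to-a-maximum computation leading to the contradiction after \eqref{e:30}, while you invoke the standard fact that $-E^*$ is the generator of an irreducible Markov chain with strictly positive rates (thanks to $\nu>0$ and the sign pattern of $C$), which is the same discrete maximum principle in packaged form; note you could even skip Perron--Frobenius, since $\widehat m$ itself is a strictly positive element of the one-dimensional $\ker E$. For existence and strict positivity the paper cites \cite[Theorem~2.1, Corollary~2.1]{BAKS}, whereas you give a self-contained direct-method argument and a neat discrete maximum principle at a zero node adjacent to a positive one; both are fine, and your version has the advantage of being self-contained. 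Your recovery of the uniqueness of $(\widehat u,\widehat\lambda)$ by subtracting the two discrete Hamilton--Jacobi equations and using the injectivity in Remark~\ref{injectivity_of_the_operators} is a clean substitute for the paper's argument via \eqref{optimality_conditions_proof} and \eqref{e:uniquelag}. The only place where you should add a line is the converse KKT step: a solution of \eqref{finite_difference_scheme} is only required to satisfy $\widetilde m\ge 0$, so at nodes where $\widetilde m_{i,j}=0$ the stationarity condition must be checked as a subgradient inequality at $(0,0)$; this is exactly what your formula $\sup_{\omega\in C}\big(\langle\omega,q\rangle-b(\eta,\omega)\big)=\tfrac{\eta}{2}|P_Cq|^2$ delivers, because the discrete HJB equation holds with equality at every node, but the case distinction deserves to be made explicit.
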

\begin{proof} Denote by $\mathcal{S}$ the set of solutions to 
\eqref{discrete_optimization_problem}. The 
proofs that $\mathcal{S}\neq \emp$ and that if $(\widehat{m}, 
\widehat{w})\in \mathcal{S}$ then $\widehat{m}$ is strictly positive,   
follow 
exactly the same arguments than those in \cite[Theorem 2.1]{BAKS} 
and in \cite[Corollary 2.1]{BAKS}, respectively. Fix $(\widehat{m}, 
\widehat{w})\in \mathcal{S}$ and denote by $\Lambda(\widehat{m}, 
\widehat{w})$ the set of Lagrange multipliers at $(\widehat{m}, 
\widehat{w})$, i.e., the set of $(u, \lambda) \in \Y_h \times \RR$ such 
that 
\begin{equation}\label{optimality_conditions_proof}
\begin{array}{rcl}
\nu (- \Delta_h)^* u  +\lambda \mathbf{1}  - 
\nabla_{m}B_h(\widehat{m}, \widehat{w})&=& \nabla 
\Phi_{h}(\widehat{m}),\\[6pt]
(\mbox{div}_{h})^* u  &= & \nabla_{w} B_h(\widehat{m}, 
\widehat{w}),
\end{array}
\end{equation}
where we observe that $B_h(\widehat{m},\cdot)$ is differentiable 
since $\widehat{m}>0$.

The constraints of problem \eqref{discrete_optimization_problem}  
being affine, it follows from \cite[Lemma~2.2]{BAKS} and 
\cite[Fact~15.25(i)]{19.Livre1} that $\Lambda(\widehat{m}, 
\widehat{w})\ne\varnothing$. 
Moreover,  we deduce from Remark 
\ref{injectivity_of_the_operators} 
that, for every $u\in \Y_{h}$ and $\lambda\in\RR$ such that 
$(-\Delta_h)^*(u)+\lambda 
\mathbf{1}=0$, we have
\begin{equation}
0=\scal{u}{(-\Delta_h)^*(u)+\lambda 
\mathbf{1}}=\sum_{i,j=0}^{N-1}\left[(D_{1}u)_{i,j}^2
+ (D_{2}u)_{i,j}^2 \right],
\end{equation}
and hence $u=0$ and $\lambda=0$.
Therefore, 
$\Y_{h} \times \RR \ni (u,\lambda) \mapsto \nu 
(-\Delta_h)^*(u) +\lambda \mathbf{1} \in \M_h$ is injective and, 
from
the first equation of \eqref{optimality_conditions_proof}, we obtain
that $\Lambda(\widehat{m}, \widehat{w})$ is a singleton, say
$\Lambda(\widehat{m}, \widehat{w})=\{(\widehat{u}, 
\widehat{\lambda})\}$. Hence, the convexity of 
the problem in \eqref{discrete_optimization_problem}, 
\cite[Theorem~19.1]{19.Livre1}, and \cite[Lemma~2.2]{BAKS} imply 
that
\begin{equation}
\label{e:uniquelag}
(\forall (m',w')\in \mathcal{S})\quad 
\Lambda(m', w')=\Lambda(\widehat{m}, 
\widehat{w})=\{(\widehat{u}, 
\widehat{\lambda})\}.
\end{equation}
In addition, arguing as in the proof of 
\cite[Theorem 2.1{\rm(ii)}]{BAKS}, since $(\widehat{u}, 
\widehat{\lambda})$ is a solution to 
\eqref{optimality_conditions_proof}, we obtain that $(\widehat{m}, 
\widehat{u}, \widehat{\lambda})$ solves 
\eqref{finite_difference_scheme}. Conversely, if $(m',u',\lambda')$ 
solves \eqref{finite_difference_scheme},  by setting
$w'= m'P_{C}(-[D_hu'])$, we deduce that $(m',w',u',\lambda')$
solves \eqref{optimality_conditions_proof} and $(m',w')$ 
satisfies the constraints in 
\eqref{discrete_optimization_problem}. Since the latter is a convex 
optimization problem, we obtain from \cite[Theorem~19.1]{19.Livre1}  
and \eqref{e:uniquelag} 
that $(m', w') \in 
\mathcal{S}$ and
$(u',\lambda')\in \Lambda(m',w')=\{(\widehat{u}, 
\widehat{\lambda})\}$. Altogether, we have proved the
uniqueness of $(\widehat{u},\widehat{\lambda})$ in 
\eqref{finite_difference_scheme}. It only remains to prove 
the uniqueness of $\widehat{m}$, since it implies the 
uniqueness of $\widehat{w}$. 
For this purpose, 
define $E: \M_{h} \to \M_{h}$ as 
\begin{equation}
\label{e:defE}
(\forall (i,j)\in\mathcal{I}_N^2) \quad (Em)_{i,j}= -\nu(\Delta_h m)_{i,j} 
+\text{div}_h\big(m P_{C}\big(-[D_h\widehat{u} 
]\big)\big)_{i,j}.
\end{equation}
Note that the second equation in \eqref{finite_difference_scheme}
with $u=\widehat{u}$
is equivalent to $m\in\ker E$.
Then, since \cite[Fact~2.25]{19.Livre1} yields
$\M_{h}=\ran(E^*)\oplus\ker(E)$,
the rank–nullity theorem implies
$$\dim(\ran(E^*))+\dim(\ker(E))= \dim 
(\ker(E^{*})) + \dim(\ran(E^*))$$
and, hence, 
\begin{equation}\label{dim_1_TNI}\dim (\ker(E))=\dim  \ker(E^*).
\end{equation}
We claim that 
\begin{equation}
\label{e:claim}
\ker(E^*)=\menge{\alpha 
\mathbf{1}}{\alpha\in \RR},
\end{equation}
which implies that
there exists $\widehat{z} \in \M_{h}$ such that $\ker(E)=  
\menge{\alpha \widehat{z}}{\alpha\in \RR}$ in view of 
\eqref{dim_1_TNI}. Thus, there exists 
$\widehat{\alpha}\in\RR$ such that
$\widehat{m}=\widehat{\alpha}\widehat{z}$ and, since $h^2 \sum_{i,j} 
\widehat{m}_{i,j}=1$, we deduce $\sum_{i,j} 
\widehat{z}_{i,j}\neq 0$. Now, if $\tilde{m}\in  \ker (E)$ 
and $h^2 \sum_{i,j} \tilde{m}_{i,j}=1$, there exists 
$\tilde{\alpha}\in\RR$ such that 
$\tilde{m}=\tilde{\alpha}\widehat{z}$. Hence,
$0=\sum_{i,j} 
(\tilde{m}_{i,j}-\widehat{m}_{i,j})=(\tilde{\alpha}-\widehat{\alpha})
\sum_{i,j} \widehat{z}_{i,j}$, which yields 
$\tilde{\alpha}=\widehat{\alpha}$, implying the uniqueness of 
$\widehat{m}$.

It remains to prove \eqref{e:claim}. Note that it follows from 
\eqref{e:defE} and \eqref{e:adj} that
\begin{equation}
\label{e:Estar}
(\forall (i,j)\in\mathcal{I}_N^2) \quad  (E^*z)_{i,j}=-\nu(\Delta_h z)_{i,j} 
- \langle P_{C}\big(-[D_h\widehat{u} 
]\big)\big)_{i,j}\,|\,[D_h z]_{i,j}\rangle,
\end{equation}
and, therefore, we deduce that $\menge{\alpha 
\mathbf{1}}{\alpha\in \RR}\subset \ker(E^*)$.
Conversely, suppose that there exists a 
nonconstant $z\in \M_{h}$ such that $E^*z=0$, which, from 
\eqref{e:Estar} and \eqref{e:defdiscr}, is equivalent to
\begin{align}
\label{e:30}
(\forall (i,j)\in\mathcal{I}_N^2) \quad  0&=-\nu(\Delta_h z)_{i,j} 
- \langle P_{C}\big(-[D_h\widehat{u} 
]\big)\big)_{i,j}\,|\,[D_h z]_{i,j}\rangle\nonumber\\
&=\frac{\nu}{h}\big((D_1z)_{i-1,j}+(D_2z)_{i,j-1}-(D_1z)_{i,j}
-(D_2z)_{i,j}\big)\nonumber\\
&\quad+(D_1z)_{i,j}[(D_1\widehat{u})_{i,j}]_- +
(D_1z)_{i-1,j}[(D_1\widehat{u})_{i-1,j}]_+\nonumber\\
&\quad+(D_2z)_{i,j}[(D_2\widehat{u})_{i,j}]_- 
+(D_2z)_{i,j-1}[(D_2\widehat{u})_{i,j-1}]_+.
\end{align}
For every $x_{i,j} \in 
\mathbb{T}_{h}^{2}$, set $\mathcal{N}(x_{i,j})=\{x_{i-1,j}, x_{i+1,j}, 
x_{i,j-1},x_{i,j+1}\}$. Since $\mathbb{T}_{h}^{2}$ is a finite set, there 
exist $x_{i_0,j_0}\in \mathbb{T}_{h}^{2}$, $n \in 
\NN\setminus\{0\}$, and 
$\{x_{i_k,j_k}\}_{k=1}^n\subset \mathbb{T}_{h}^{2}$ such that for all 
$k=0, \ldots, n-1$ 
we have $x_{i_{k+1},j_{k+1}} \in \mathcal{N}(x_{i_k,j_k})$, 
$z(x_{i_{k},j_{k}}) <z(x_{i_{k+1},j_{k+1}})$,  and $z(x_{i_{n},j_{n}})\geq 
z(x)$ for all $x\in \mathcal{N}(x_{i_{n},j_{n}})$. Hence,
$(D_{1}z)_{i_n,j_n}\le 0$, $(D_{2}z)_{i_n,j_n}\le 0$, 
$(D_{1}z)_{i_n-1,j_n}\ge 0$, and $(D_{2}z)_{i_n,j_n-1}\ge 0$, and one 
of previous inequalities is strict.
Altogether, since 
\eqref{e:30} in $(i_{n},j_{n})$ can be written as
$$\begin{array}{l}
(D_{1}z)_{i_n,j_n}\big (\nu -h[(D_{1}\widehat{u})_{i_n,j_n}]_{-} 
\big)+(D_{2}z)_{i_n,j_n}\big (\nu -h[(D_{2}\widehat{u})_{i_n,j_n}]_{-} 
\big)\\[8pt]
\hspace{0.3cm}=(D_{1}z)_{i_n-1,j_n}\big (\nu 
+h[(D_{1}\widehat{u})_{i_n-1,j_n}]_{+} \big)+(D_{2}z)_{i_n,j_n-1}\big 
(\nu +h[(D_{2}\widehat{u})_{i_n,j_n-1}]_{+} \big),
\end{array}
$$
we obtain a contradiction, and the proof is complete.
\end{proof}

\begin{remark}
Note that Proposition~\ref{p:uniq} also holds for every
convex differentiable non-local coupling $\Phi_h$.
\end{remark}
\subsubsection{Algorithms}

Now we focus in numerical approaches to solve 
\eqref{discrete_optimization_problem} for several formulations of the 
problem. We start with numerical 
approaches  in \cite{10.condat,partial_inv} and we compare their 
efficiency with the vector subspace technique introduced in this paper.

Note that \eqref{discrete_optimization_problem} is equivalent to
\begin{align}
\label{e:1form}
\min_{(m,w)\in\mathcal{M}_h\times\mathcal{W}_h}F(m,w)+G_1(L_1(m,w))+H(m,w),
\end{align}
where
\begin{equation}
\label{e:deffunctMFG}
\begin{cases}
F\colon (m,w)\mapsto B_h(m,w) +\scal{K_0}{m} \\
G_1=\iota_{\{(0,1)\}}\\
L_1\colon (m,w)\mapsto 
(-\nu\Delta_hm+{{\rm 
div}}_h(w),h^2\scal{\boldsymbol{1}}{m})\\
H\colon (m,w)\mapsto \frac{1}{2}\scal{m}{K_hm}.
\end{cases}
\end{equation}

Observe that 
\begin{equation}
\label{e:nablaH}
\nabla H\colon (m,w)\mapsto 
(K_hm,0)
\end{equation}
is $\|K_h\|-$Lipschitz.
Then the algorithm proposed in \cite{jota1} with $T=P_S$ solves 
\eqref{e:1form}, which
reduces to, for every $k\in\NN$, 
\begin{equation}
\label{mfg_condat_1}
\left\lfloor
\begin{array}{ll}
(p^{k+1},v^{k+1})= 
(p^{k}+\gamma(-\nu{\Delta}_h\overline{m}^{k}
+{\rm div}_h(\overline{w}^{k})),v^{k}+\gamma
h^2\scal{\mathbf{1}}{\overline{m}^{k}}-\gamma)\\
(n^{k+1},z^{k+1})=(m^{k}\!-\tau 
((-\nu{\Delta}_h)^{*}p^{k+1}\!\!+\!h^{2}v^{k+1}\mathbf{1}+K_hm^{k}),w^{k}
\!-\!\tau
({\rm div}_h)^{*}p^{k+1})\\
(m^{k+1},w^{k+1})=\prox_{\tau
F}(n^{k+1},z^{k+1})\\
(\widetilde{m}^{k+1},\widetilde{w}^{k+1})=P_S(m^{k+1},w^{k+1})\\
(\overline{m}^{k+1},\overline{w}^{k+1})=(m^{k+1}+\widetilde{m}^{k+1}-
m^{k},w^{k+1}+\widetilde{w}^{k+1}-w^{k}),
\end{array}
\right.
\end{equation}
where $(m^{0},w^{0})\in\mathcal{M}_h\times\mathcal{W}_h$, 
$(\overline{m}^{0},\overline{w}^{0})=(m^{0},w^{0})$, 
$(p^{0},v^{0})\in\mathcal{M}_h\times \RR$, and $S\supset \arg\min 
(F+G\circ 
L_1+H)$. This method converges 
if $\gamma>0$ and $\tau>0$ satisfy 
\begin{equation}
\|L_{1}\|^{2}<
\dfrac{1}{\gamma}\left(\dfrac{1}{\tau}-\dfrac{\|K_h\|}{2}\right).
\end{equation} 
When $S=\M_h\times\mathcal{W}_h$, \eqref{mfg_condat_1} reduces 
to the method proposed in \cite{10.condat}. As noticed in 
\cite[Remark~4.1]{BAKS}, this algorithm generates unfeasible primal 
sequences leading to slow convergence and it will be not 
considered in our comparisons in Section~\ref{sec:numer}.
To reinforce feasibility, we consider $S=\menge{(m,w)\in\M_h\times 
\mathcal{W}_h}{h^2\scal{\boldsymbol{1}}{m}=1}$ as in \cite{BAKS}. 

An equivalent formulation to \eqref{discrete_optimization_problem} is 
\begin{align}
\label{e:2form}
\min_{(m,w)\in \M_h\times\mathcal{W}_h}F(m,w)+G_2(m,w)+H(m,w),
\end{align}
where $G_2=\iota_{\{(0,1)\}}\circ L_1$.
The formulation in \eqref{e:2form} can also be solved by 
\cite{10.condat} in the case when the linear operator is $\Id$.
Note that $\boldsymbol{1}\in\ker(-\nu{\Delta}_h)$
and $h^2\scal{\boldsymbol{1}}{\boldsymbol{1}}=1$, which yields
$G_2=\iota_{(\ker L_1+(\boldsymbol{1},0))}$
and $\prox_{\gamma 
G_{2}^{*}}\colon (m,w)\mapsto (\Id-P_{\ker 
L_1})(m-\gamma\mathbf{1},w)$
\cite[Theorem~14.3(ii)]{19.Livre1}. Hence, it follows from 
\eqref{e:nablaH} that algorithm in \cite{10.condat} 
reduces to 
\begin{equation}
\label{mfg_condat_2}
(\forall k\in {\mathbb{N}})\quad
\left\lfloor
\begin{array}{ll}
(p^{k+1},\ell^{k+1})= 
(\Id-P_{\ker 
L_1})(p^{k}
+\gamma\overline{m}^{k}-\gamma\mathbf{1},\ell^{k}+\gamma\overline{w}^{k})\\
(m^{k+1},w^{k+1})=\prox_{\tau F}(m^{k}-\tau 
(p^{k+1}+K_hm^{k}),w^{k}-\tau\ell^{k+1})\\
(\overline{m}^{k+1},\overline{w}^{k+1})=(2m^{k+1}-m^{k},2w^{k+1}-w^{k}),
\end{array}
\right.
\end{equation}
where $(m^{0},w^{0})\in\M_h\times\mathcal{W}_h$, 
$(\overline{m}^{0},\overline{w}^{0})=(m^{0},w^{0})$ and 
$(p^{0},\ell^{0})\in\M_h\times\mathcal{W}_h$. In this case, the 
algorithm converges if $\gamma>0$ and $\tau>0$ satisfy
\begin{equation}
\label{e:Hcondgrad}
\gamma<\dfrac{1}{\tau}-\dfrac{\|K_h\|}{2}.
\end{equation}
On the other hand, by defining the 
closed vector subspace
\begin{equation}
V=\ker L_1,
\end{equation}
we have $G_2=\iota_{V+(\boldsymbol{1},0)}$ and, by setting 
$\rho=m-\boldsymbol{1}$,
\eqref{e:2form} is equivalent to
\begin{align}
\label{e:3form}
\min_{(\rho,w)\in V}F(\rho+\mathbf{1},w)+H(\rho+\mathbf{1},w).
\end{align}
This problem can be solved by using the 
algorithm in \cite{partial_inv}. Note that $\nabla 
H(\cdot+(\mathbf{1},0))\colon(\rho,w)\mapsto 
(K_h(\rho+\mathbf{1}),0)$ is $\|K_h\|-$Lipschitz and
the algorithm in \cite[Corollary~5.5]{partial_inv} without
relaxation steps reduces to,
for every $k\in\NN$,
\begin{equation}
\label{mfg_fb_sev}
\left\lfloor
\begin{array}{ll}
(s^{k+1},t^{k+1})=\prox_{\tau F}\big((\rho^{k}+\mathbf{1}+\tau 
z^{k},w^{k}+\tau v^{k})-\tau P_{\ker 
L_{1}}(K_h(\rho^{k}+\mathbf{1}),0)\big)\\
(\rho^{k+1},w^{k+1})=P_{\ker L_{1}}(s^{k+1}-\mathbf{1},t^{k+1})\\
(z^{k+1},v^{k+1})=(z^{k}+(\rho^{k+1}-s^{k+1}+\mathbf{1})
/\tau,v^{k}+(w^{k+1}-t^{k+1})/\tau),
\end{array}
\right.
\end{equation}
where $(\rho^{0},w^{0})\in\ker L_{1}$ and 
$(z^{0},v^{0})\in(\ker L_{1})^{\perp}$. The algorithm converges 
under the condition $0<\tau<2/\|K_h\|$. 

An alternative method for solving \eqref{e:3form} is  our algorithm 
when the linear operator is $\Id$.
For every $\gamma >0$, 
\cite[Proposition~24.8(ii)]{19.Livre1} yields
$
\prox_{ 
H(\cdot+(\mathbf{1},0))/\gamma}\colon 
(\rho,w)\mapsto \big((\Id+ 
K_h/\gamma)^{-1}(\rho+\mathbf{1})-\mathbf{1},w\big)
$
and from
\cite[Theorem~14.3(ii)]{19.Livre1} we obtain
\begin{align}
\prox_{\gamma H(\cdot + (\mathbf{1},0))^{*}}\colon (\rho,w)&\mapsto 
(\rho,w)-\gamma\prox_{
H(\cdot+(\mathbf{1},0))/\gamma}(\rho/\gamma, w/\gamma)\nonumber\\
&=\big(\rho+\gamma \mathbf{1}-\gamma (\Id+ 
K_h/\gamma)^{-1}(\rho/\gamma+\mathbf{1}),0\big).
\end{align}
Hence, by using a similar translation for $\prox_{\tau
F(\cdot+(\mathbf{1},0))}$,
\eqref{alg_opt} in the case when $W$ is the whole space and $T=\Id$ 
reduces to 
\begin{equation}
\label{mfg_cp_sev_1}
(\forall k\in {\mathbb{N}})\quad
\left\lfloor
\begin{array}{ll}
p^{k+1}=p^{k}+\gamma\overline{\rho}^{k}+\gamma 
\mathbf{1}-\gamma\left(\Id+K_h/\gamma\right)^{-1}
\left(p^{k}/\gamma+\overline{\rho}^{k}+\mathbf{1}\right)\\
(s^{k+1},t^{k+1})= \prox_{\tau F}\!\big((\rho^{k}+\mathbf{1}+\tau 
z^{k},w^{k}\!+\tau v^{k})\!-\tau P_{\ker 
L_{1}}(p^{k+1}\!,0)\big)\\
(\rho^{k+1},w^{k+1})=P_{\ker L_{1}}(s^{k+1}-\mathbf{1},t^{k+1})\\
(z^{k+1},v^{k+1})=(z^{k}+(\rho^{k+1}-s^{k+1}+\mathbf{1})/\tau,v^{k}+(w^{k+1}-t^{k+1})/\tau)\\
\overline{\rho}^{k+1}=2\rho^{k+1}-\rho^{k},
\end{array}
\right.
\end{equation}
where $(\rho^{0},w^{0})\in\ker L_{1}$, 
$\overline{\rho}^{0}=\rho^{0}$, 
$(z^{0},v^{0})\in(\ker L_{1})^{\perp}$, and $p^{0}\in\M_h$.
In this context, the algorithm converges for every $\tau>0$ and 
$\gamma>0$ satisfying
$\tau\gamma<1$, in view of \eqref{e:pramcond}. Note that
$P_{\ker L_1}$ can be computed by using 
\cite[Example~29.17(iii)]{19.Livre1}.

For the last formulation of this section, observe that, since $k$ is 
a PDS kernel, it follows from \eqref{e:PDS} that 
the operator $K_h$ defined in \eqref{e:Ks} is positive 
semidefinite and, thus, there 
exists a
symmetric positive semidefinite linear operator 
$K_h^{1/2}\colon \mathcal{M}_h\to\mathcal{M}_h$ such that, for every 
$m\in\mathcal{M}_h$,
$\scal{m}{K_hm}=\sum_{i,j}|(K_h^{1/2}m)_{i,j}|^2$ 
\cite[Theorem~VI.9]{ReedSimon}.
Hence, \eqref{e:3form} can be written 
equivalently as
\begin{align}
\label{e:4form}
\min_{(\rho,w)\in V}F(\rho+\mathbf{1},w)+
G_3(L_2(\rho+\mathbf{1},w)),
\end{align}
where 
\begin{equation}
\label{e:quad}
\begin{cases}
G_3=\frac{1}{2}\|\cdot \|^2,\\
L_2\colon (m,w)\mapsto K_h^{1/2}m.
\end{cases}
\end{equation}
From \cite[Proposition~24.8(i) \& Theorem~14.3(ii)]{19.Livre1}, we 
deduce that, for every $\gamma>0$,
\begin{align*}
\prox_{\gamma G_{3}(\cdot + K_h^{1/2}\mathbf{1})^{*}}\colon \rho
&\mapsto 
\rho-\gamma\prox_{\|\cdot + 
K_h^{1/2}\mathbf{1}\|^2/(2\gamma)}(\rho/\gamma)\nonumber\\
&=(\rho+\gamma 
K_h^{1/2}\mathbf{1})/(1+\gamma).
\end{align*}
Moreover, since $L_{2}^{*}\colon m\mapsto (K_h^{1/2}m,0)$, the 
algorithm in  \eqref{alg_opt} in the case when $W$ is the 
whole space and $T=\Id$  reduces to, for every $k\in\NN$,
\begin{equation}
\label{mfg_cp_sev_2}
\left\lfloor
\begin{array}{ll}
p^{k+1}=(p^{k}+\gamma 
K_h^{1/2}(\overline{\rho}^{k}+\mathbf{1}))/(1+\gamma)\\
(s^{k+1},t^{k+1})= \prox_{\tau F}\big((\rho^{k}+\mathbf{1}+\tau 
z^{k},w^{k}+\tau v^{k})-\tau P_{\ker 
L_{1}}(K_h^{1/2}p^{k+1},0)\big)\\
(\rho^{k+1},w^{k+1})=P_{\ker L_{1}}(s^{k+1}-\mathbf{1},t^{k+1})\\
(z^{k+1},v^{k+1})=(z^{k}+(\rho^{k+1}-s^{k+1}+\mathbf{1})/\tau,v^{k}+(w^{k+1}-t^{k+1})/\tau)\\
\overline{\rho}^{k+1}=2\rho^{k+1}-\rho^{k},
\end{array}
\right.
\end{equation}
where $(\rho^{0},w^{0})\in\ker L_{1}$, 
$\overline{\rho}^{0}=\rho^{0}$, 
$(z^{0},v^{0})\in(\ker L_{1})^{\perp}$, and $p^{0}\in\M_h$. 
In this context, the algorithm converges for every $\tau>0$ and 
$\gamma >0$ satisfying
$\tau\gamma\|K_h^{1/2}\|^2=\tau\gamma\|K_h\|<1$, in view of 
\eqref{e:pramcond}.

\begin{remark}
\label{rem:alg}
Note that, when $\|K_h\|$ is large, algorithm \eqref{mfg_cp_sev_2} 
allows for
a larger set of admissible step-sizes than the algorithm in 
\eqref{mfg_condat_2} in view of
condition \eqref{e:Hcondgrad}. Indeed, \eqref{e:Hcondgrad}
imposes condition $\tau<2/\|K_h\|$ on the primal step-size of 
\eqref{mfg_condat_2}, which affects its efficiency as we will 
see in Section~\ref{sec:numer}.
\end{remark}

In all previous methods we need to compute $\prox_{F}$, where
$F$ is defined in \eqref{e:deffunctMFG}. Note that
\begin{equation}
F\colon (m,w)\mapsto \sum_{i,j=0}^{N-1}f_{i,j}(m_{i,j},w_{i,j}),
\end{equation}
where $f_{i,j}\colon (\eta,\omega)\mapsto 
b(\eta,\omega)+(K_0)_{i,j}\eta$.
We deduce 
from \cite[Corollary~3.1]{BAKS} that $F$
is convex, proper, and lower semicontinuous, for every 
$\gamma>0$,
$\prox_{\gamma F}\colon (m,w)\mapsto (\prox_{\gamma 
f_{i,j}}(m_{i,j},w_{i,j}))_{0\le i,j\le N-1}$, and, for every $0\le i,j\le N-1$,
\begin{equation}
\label{e:proxfinalb}
\prox_{\gamma f_{i,j}}
\colon(\eta,\omega)\mapsto
\begin{cases}
(0,0),\quad&\text{if }\gamma (K_0)_{i,j}\geq \eta+
|P_C\omega|^2/(2\gamma);\\
\Big(p^*,\displaystyle{\frac{p^*}{p^*+\gamma}}P_C\omega\Big),&\text{otherwise},
\end{cases}
\end{equation}
where $p^*> 0$ is the unique solution to 
\begin{equation}
(p+\gamma (K_0)_{i,j}-\eta)
\left(p +\gamma\right)^2-
\frac{\gamma}{2}|P_C\omega|^2=0.
\end{equation}
This computation is also obtained in 
\cite[Proposition~1]{Peyre}
in the case $C=\RR^4$.

\begin{remark}
Observe that algorithms in \eqref{mfg_condat_1}, 
\eqref{mfg_condat_2}, \eqref{mfg_fb_sev}, \eqref{mfg_cp_sev_1},
and \eqref{mfg_cp_sev_2} can also be used in the presence of local 
couplings as those studied in \cite{BAKS}. 
\end{remark}
\subsubsection{Numerical experiments}
\label{sec:numer}
We consider $h\in\{1/20,1/40\}$, the positive definite 
non-local coupling in 
\cite{Achdou_Capuzzo_Dolcetta_2010} given by 
\begin{equation}
K_h=\mu(\Id-\Delta_h)^{-p}
\end{equation}
 for $\mu=10$ and $p=1$, and 
\begin{equation}
(\forall (i,j)\in\mathcal{I}^2_N)\quad 
(K_0)_{i,j}=-\sin(2\pi hj)+\sin(2\pi hi)+\cos(4\pi hi).
\end{equation}
We vary $\nu\in\{0.05,0.2,0.5\}$ and,
for every $h\in\{1/20,1/40\}$, we choose to stop every algorithm 
when the 
$L^2$ norm of the difference between two consecutive iterations 
is less than $5h^3$ or the number of iterations exceeds 3000. 
In Tables~\ref{t:20_05}-\ref{t:40_5}, we report computational 
time, number of iterations,  value of the objective function, and 
residuals of  the constraints at the resulting vector $(m^*,w^*)$.
 
\begin{table}[H]
	\caption{Execution time, number of iterations with error tolerance 
	$5h^{3}$, value of the objective function in solution, and residuals 
	of solution for the case $h=1/20$ and $\nu=0.05$}
	\centering
	\begin{tabular}{c|c|c|c|c|c|}
		\cline{1-6}
		\multicolumn{1}{|c|}{  Alg.}        & time(s) & iter. &  
		obj. value & $\|L_1(m^*,w^*)-(0,1)\|$ & $d^2_C(w^*)$\\ \hline
		\multicolumn{1}{|c|}{\eqref{mfg_condat_1}}   &         0.67  
		& 55
		&   20000  &  3.469  & 0 \\ \hline
		\multicolumn{1}{|c|}{\eqref{mfg_condat_2}}  & 
		32.03
		& 833
		&  20148 & 0.00135
		& 0 \\ \hline
		\multicolumn{1}{|c|}{\eqref{mfg_fb_sev}}  &  
		39.46 & 658
		& 20158 & $5.467\cdot 10^{-13}$
		& $1.518\cdot 10^{-9}$\\ \hline
		\multicolumn{1}{|c|}{\eqref{mfg_cp_sev_1}}  &    43.77
		& 774
		&  20172 & $1.059 \cdot 10^{-12}$
		& $1.379 \cdot 10^{-5}$\\ \hline
		\multicolumn{1}{|c|}{\eqref{mfg_cp_sev_2}} & 12.88 
		& 260
		& 20169  & $3.058 \cdot 10^{-12}$
		& $2.150 \cdot 10^{-8}$\\ \hline
	\end{tabular}
	\label{t:20_05}
\end{table}

\begin{table}[H]
	\caption{Execution time, number of iterations with error tolerance 
	$5h^{3}$, value of the objective function in solution, and residuals 
	of solution for the case $h=1/20$ and $\nu=0.2$}
	\centering
	\begin{tabular}{c|c|c|c|c|c|}
		\cline{1-6}
		\multicolumn{1}{|c|}{  Alg.}        & time(s) & iter. &  
		obj. value & $\|L_1(m^*,w^*)-(0,1)\|$ & $d^2_C(w^*)$\\ \hline
		\multicolumn{1}{|c|}{\eqref{mfg_condat_1}}   &         1.03  
		& 81
		&   20000  &  15.365  & 0 \\ \hline
		\multicolumn{1}{|c|}{\eqref{mfg_condat_2}}  & 
		14.10
		& 312
		&  20076 & 0.00309
		& 0 \\ \hline
		\multicolumn{1}{|c|}{ \eqref{mfg_fb_sev}}  &  
		13.84 & 226
		& 20083 & $5.373\cdot 10^{-13}$
		& $2.688\cdot 10^{-9}$\\ \hline
		\multicolumn{1}{|c|}{\eqref{mfg_cp_sev_1}}  &    47.04
		& 741
		&  20089 & $1.403 \cdot 10^{-12}$
		& $2.779 \cdot 10^{-12}$\\ \hline
		\multicolumn{1}{|c|}{\eqref{mfg_cp_sev_2}} & 5.17 
		& 80
		& 20089 & $5.183 \cdot 10^{-13}$
		& $1.555 \cdot 10^{-8}$\\ \hline
	\end{tabular}
		\label{t:20_2}
\end{table}

\begin{table}[H]
	\caption{Execution time, number of iterations with error tolerance 
	$5h^{3}$, value of the objective function in solution, and residuals 
	of 
	solution for the case $h=1/20$ and $\nu=0.5$}
	\centering
	\begin{tabular}{c|c|c|c|c|c|}
		\cline{1-6}
\multicolumn{1}{|c|}{  Alg.}        & time(s) & iter. &  
		obj. value & $\|L_1(m^*,w^*)-(0,1)\|$ & $d^2_C(w^*)$\\ \hline
		\multicolumn{1}{|c|}{\eqref{mfg_condat_1}}   &         1.01  
		& 87
		&   20000  &  34.964  & 0 \\ \hline
		\multicolumn{1}{|c|}{ \eqref{mfg_condat_2}}  & 
		7.24
		& 161
		&  20014 & 0.00528
		& 0 \\ \hline
		\multicolumn{1}{|c|}{\eqref{mfg_fb_sev}}  &  
		7.18 & 128
		& 20017 & $1.063\cdot 10^{-12}$
		& $2.176\cdot 10^{-10}$\\ \hline
		\multicolumn{1}{|c|}{\eqref{mfg_cp_sev_1}}  &    43.22
		& 741
		&  20021 & $1.127 \cdot 10^{-12}$
		& $1.536 \cdot 10^{-13}$\\ \hline
		\multicolumn{1}{|c|}{ \eqref{mfg_cp_sev_2}} & 4.38 
		& 78
		& 20021 & $1.108 \cdot 10^{-12}$
		& $1.395 \cdot 10^{-11}$\\ \hline
	\end{tabular}
			\label{t:20_5}
\end{table}

\begin{table}[H]
	\caption{Execution time, number of iterations with error tolerance 
	$5h^{3}$, value of the objective function in solution, and residuals 
	of solution for the case $h=1/40$ and $\nu=0.05$}
	\centering
	\begin{tabular}{c|c|c|c|c|c|}
		\cline{1-6}
\multicolumn{1}{|c|}{  Alg.}        & time(s) & iter. &  
		obj. value & $\|L_1(m^*,w^*)-(0,1)\|$ & $d^2_C(w^*)$\\ \hline
		\multicolumn{1}{|c|}{\eqref{mfg_condat_1}}   &         22.32 
		& 462
		&   80000  &  7.365  & 0 \\ \hline
		\multicolumn{1}{|c|}{\eqref{mfg_condat_2}}  & 
		2389.47
		& 3000
		&  80705 & 0.00786
		& 0 \\ \hline
		\multicolumn{1}{|c|}{\eqref{mfg_fb_sev}}  &  
		2776.92 & 1915
		& 80710 & $3.152\cdot 10^{-12}$
		& $5.686\cdot 10^{-11}$\\ \hline
		\multicolumn{1}{|c|}{\eqref{mfg_cp_sev_1}}  &    4189.86
		& 3000
		&  80717 & $4.061 \cdot 10^{-12}$
		& $3.258 \cdot 10^{-6}$\\ \hline
		\multicolumn{1}{|c|}{ \eqref{mfg_cp_sev_2}} & 751.18 
		& 695
		& 80717  & $3.757 \cdot 10^{-12}$
		& $9.748 \cdot 10^{-9}$\\ \hline	
	\end{tabular}
			\label{t:40_05}
\end{table}

\begin{table}[H]
	\caption{Execution time, number of iterations with error tolerance 
	$5h^{3}$, value of the objective function in solution, and residuals 
	of solution for the case $h=1/40$ and $\nu=0.2$}
	\centering
	\begin{tabular}{c|c|c|c|c|c|}
		\cline{1-6}
\multicolumn{1}{|c|}{  Alg.}        & time(s) & iter. &  
		obj. value & $\|L_1(m^*,w^*)-(0,1)\|$ & $d^2_C(w^*)$\\ \hline
		\multicolumn{1}{|c|}{\eqref{mfg_condat_1}}   &         38.55 
		& 727
		&   80000  &  27.414  & 0 \\ \hline
		\multicolumn{1}{|c|}{\eqref{mfg_condat_2}}  & 
		533.00
		& 727
		&  80369 & 0.00770
		& 0 \\ \hline
		\multicolumn{1}{|c|}{ \eqref{mfg_fb_sev}}  &  
		636.59 & 461
		& 80372 & $4.219\cdot 10^{-12}$
		& $1.298\cdot 10^{-10}$\\ \hline
		\multicolumn{1}{|c|}{\eqref{mfg_cp_sev_1}}  &    1387.21
		& 950
		&  80376 & $4.708 \cdot 10^{-12}$
		& $5.260 \cdot 10^{-12}$\\ \hline
		\multicolumn{1}{|c|}{\eqref{mfg_cp_sev_2}} & 136.66 
		& 119
		& 80375  & $4.491 \cdot 10^{-12}$
		& $2.492 \cdot 10^{-9}$\\ \hline
	\end{tabular}
				\label{t:40_2}
\end{table}

\begin{table}[H]
	\caption{Execution time, number of iterations with error tolerance 
	$5h^{3}$, value of the objective function in solution, and residuals 
	of solution for the case $h=1/40$ and $\nu=0.5$}
	\centering
	\begin{tabular}{c|c|c|c|c|c|}
		\cline{1-6}
\multicolumn{1}{|c|}{  Alg.}        & time(s) & iter. &  
		obj. value & $\|L_1(m^*,w^*)-(0,1)\|$ & $d^2_C(w^*)$\\ \hline
		\multicolumn{1}{|c|}{\eqref{mfg_condat_1}}   &         
		37.06 
		& 724
		&   80000  &  70.955  & 0 \\ \hline
		\multicolumn{1}{|c|}{\eqref{mfg_condat_2}}  & 
		286.23
		& 387
		&  80082 & 0.0296
		& 0 \\ \hline
		\multicolumn{1}{|c|}{\eqref{mfg_fb_sev}}  &  
		428.79 & 251
		& 80083 & $8.392\cdot 10^{-12}$
		& $2.099\cdot 10^{-11}$\\ \hline
		\multicolumn{1}{|c|}{\eqref{mfg_cp_sev_1}}  &    2036.42
		& 950
		&  80085 & $8.279 \cdot 10^{-12}$
		& $1.335 \cdot 10^{-12}$\\ \hline
		\multicolumn{1}{|c|}{\eqref{mfg_cp_sev_2}} & 111.39 
		& 100
		& 80085  & $8.408 \cdot 10^{-12}$
		& $2.984 \cdot 10^{-12}$\\ \hline
	\end{tabular}
	\label{t:40_5}
\end{table}

Observe that, in all cases, algorithms \eqref{mfg_condat_1} and 
\eqref{mfg_condat_2} stop at iterates which are far from the 
solution, since the 
residual $\|L_1(m^*,w^*)-(0,1)\|$ is far away from $0$ for the 
chosen 
tolerance. This residual is larger as viscosity increases. 
In contrast, for the same tolerance, the vector subspace based 
algorithms \eqref{mfg_fb_sev},
\eqref{mfg_cp_sev_1}, and \eqref{mfg_cp_sev_2} achieve
iterates with negligible residuals. Among the latter, 
our proposed algorithm \eqref{mfg_cp_sev_2} is the most efficient 
in terms of computational time and number of iterations.
We explain this good behavior  by the fact that 
\eqref{mfg_cp_sev_2} takes full 
advantage of the convex quadratic cost by splitting $K_h^{1/2}$ 
from 
$\|\cdot\|^2/2$ in its architecture (see 
\eqref{e:quad}).
A reason for this improvement is the larger step-sizes that 
this algorithm can take, as stated in Remark~\ref{rem:alg}.
In the case of 
more general couplings, algorithms 
\eqref{mfg_fb_sev} and \eqref{mfg_cp_sev_1} are also efficient 
alternatives to solve \eqref{discrete_optimization_problem}. 

In the Figures~\ref{fig:002}-\ref{fig:05}, we illustrate the solution 
obtained from algorithm 
\eqref{mfg_cp_sev_2} for different values of $\nu$ and $h$.
\begin{figure}[H]
	\centering
\begin{subfigure}{.5\textwidth}
  \centering
  \includegraphics[width=0.9\linewidth]{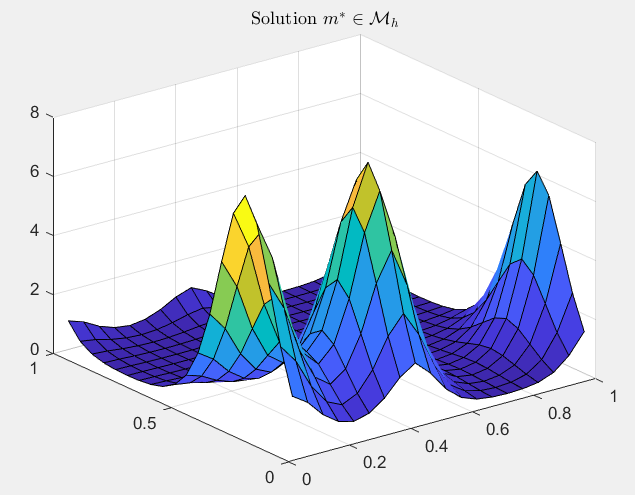}
\end{subfigure}%
\begin{subfigure}{.5\textwidth}
  \centering
  \includegraphics[width=0.9\linewidth]{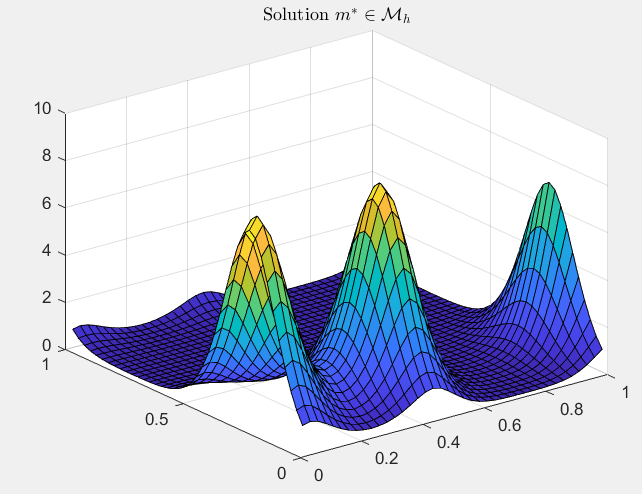}
\end{subfigure}%
	\caption{Solution $m^{*}\in\mathcal{M}_{h}$ to 
	\eqref{discrete_optimization_problem} for $\nu=0.05$ with 
	$h=1/20$ 
		(left) and $h=1/40$ (right)}
\label{fig:002}
\end{figure}

\begin{figure}[H]
	\centering
\begin{subfigure}{.5\textwidth}
  \centering
  \includegraphics[width=0.9\linewidth]{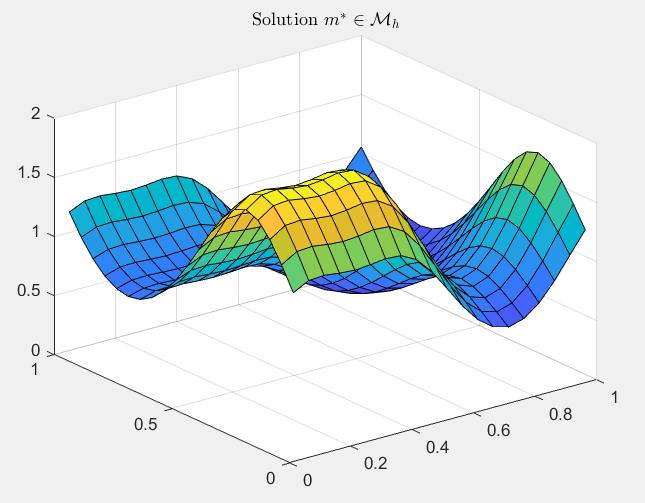}
\end{subfigure}%
\begin{subfigure}{.5\textwidth}
  \centering
  \includegraphics[width=0.9\linewidth]{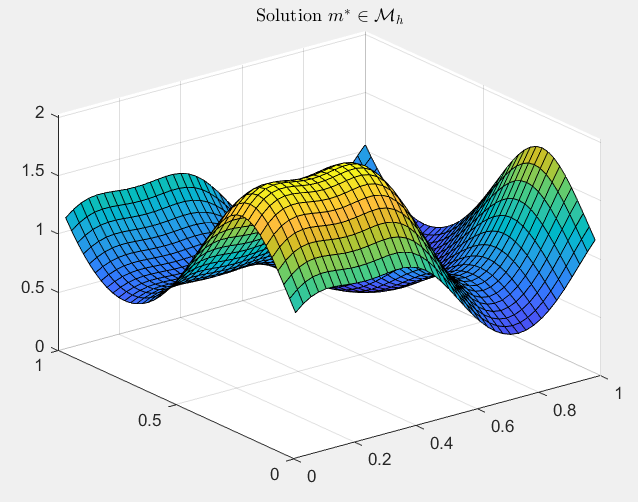}
\end{subfigure}%
	\caption{Solution $m^{*}\in\mathcal{M}_{h}$ to 
	\eqref{discrete_optimization_problem} for $\nu=0.2$ with 
$h=1/20$ 
		(left) and $h=1/40$ (right)}
\label{fig:02}
\end{figure}

\begin{figure}[H]
	\centering
\begin{subfigure}{.5\textwidth}
  \centering
  \includegraphics[width=0.9\linewidth]{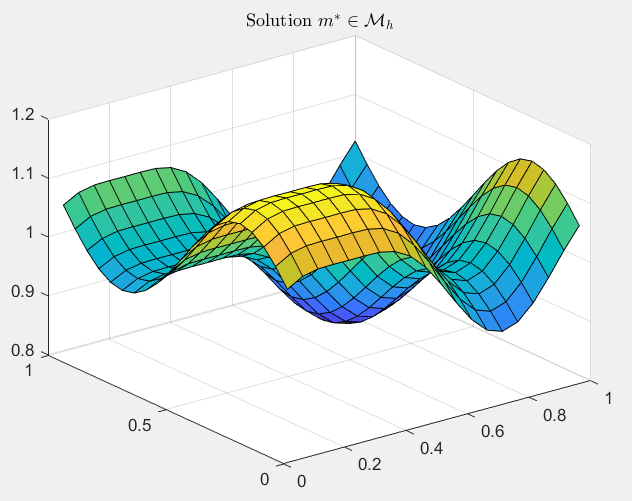}
\end{subfigure}%
\begin{subfigure}{.5\textwidth}
  \centering
  \includegraphics[width=0.9\linewidth]{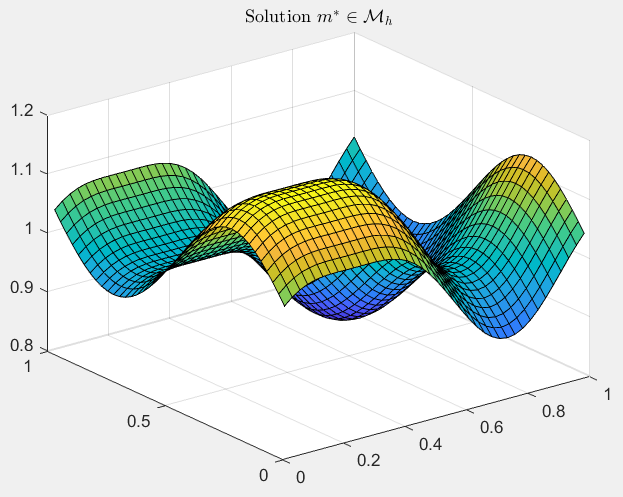}
\end{subfigure}%
	\caption{Solution $m^{*}\in\mathcal{M}_{h}$ to 
	\eqref{discrete_optimization_problem} for $\nu=0.5$ with 
$h=1/20$ 
		(left) and $h=1/40$ (right)}
\label{fig:05}
\end{figure}

\section{Conclusions}
\label{conclusion_paper2}
We propose a primal-dual method with partial inverse for solving 
constrained composite monotone inclusions involving a normal cone 
to a closed vector subspace. When the monotone operators are 
subdifferentials of convex functions, our method solves composite 
convex optimization problems over closed vector subspaces.
We also incorporate a priori information on the solution of the 
monotone inclusion, which produces an additional projection step in 
the primal-dual algorithm. Either this projection or our vector 
subspace approach produces significant gains in numerical 
efficiency with 
respect to the available methods in the literature.

\textbf{Acknowledgements}
The work of the first and second authors are founded by the 
National Agency for Research and Development (ANID) under grants 
FONDECYT 1190871 and FONDECYT 11190549, respectively. The 
third author is founded by the
Scholarship program CONICYT-PFCHA/MagísterNacional/2019 - 
22190564 and FONDECYT 1190871 of ANID. 

\printbibliography%

\end{document}